\documentclass[11pt]{article}

\usepackage[a4paper,margin=.7in]{geometry}
\usepackage[T1]{fontenc}
\usepackage[utf8]{inputenc}
\usepackage{amsmath,amssymb,amsthm,mathtools}
\usepackage{microtype}
\usepackage{enumitem}
\usepackage[hidelinks]{hyperref}

\newtheorem{theorem}{Theorem}[section]
\newtheorem{proposition}[theorem]{Proposition}
\newtheorem{lemma}[theorem]{Lemma}

\newtheorem{definition}[theorem]{Definition}
\newtheorem{assumption}[theorem]{Assumption}
\theoremstyle{remark}
\newtheorem{remark}[theorem]{Remark}

\newcommand{\R}{\mathbb{R}}
\newcommand{\eps}{\varepsilon}
\newcommand{\nupar}{\nu}
\newcommand{\dd}{\,\mathrm{d}}
\newcommand{\norm}[1]{\left\lVert #1\right\rVert}
\newcommand{\abs}[1]{\left\lvert #1\right\rvert}
\newcommand{\cK}{\mathcal{K}}
\newcommand{\cT}{\mathcal{R}}
\newcommand{\cE}{\mathcal{E}}
\newcommand{\cN}{\mathcal{N}}

\title{Regularized Subjective-Surface Flow with Monotone Reaction: Global Classical Well-Posedness and Stability}

\author{Markjoe O. Uba\\
\small School of Mathematical and Statistical Sciences, Northern Illinois University, DeKalb, IL 60115, USA\\
\small \texttt{markjoeuba@gmail.com}}
\date{}

\begin{document}
\maketitle

\begin{abstract}
Touching and dividing cell nuclei may appear as connected structures in microscopy images, making it difficult to distinguish neighboring nuclei during segmentation. We introduce and analyze a regularized subjective-surface model designed for this setting. On a smooth bounded domain $\Omega\subset\mathbb{R}^n$ with homogeneous Dirichlet boundary conditions, the model is
\[
u_t
=
\nu\Delta u
+
\left(\varepsilon^2+|\nabla u|^2\right)^{1/2}
\operatorname{div}\!\left(
G(x)\frac{\nabla u}
{\left(\varepsilon^2+|\nabla u|^2\right)^{1/2}}
\right)
-
\mu\Lambda(x)H_\eta(u-q).
\]
Here, $\varepsilon>0$ and $\nu>0$ are fixed regularization parameters, $G$ is a strictly positive smooth edge coefficient, and the nonnegative interaction weight $\Lambda$ incorporates fixed information about neighboring nucleus candidates. The principal objective of this work is to establish an existence theory for the proposed model. For compatible $C^{2+\alpha}$ initial data taking values in $[0,1]$, we prove the existence and uniqueness of a global classical solution whose restriction to every finite time interval is Schauder-classical, together with preservation of the physical range, finite-time Schauder estimates, and $L^\infty$-nonexpansive dependence on the initial data. The main analytical step is a global spatial-gradient bound, obtained by combining gradient estimates near $\partial\Omega$ with interior gradient estimates. These results provide a mathematical foundation for applying the model to the analysis of touching and dividing nuclei in 3D and 3D+time microscopy image data.
\end{abstract}

\noindent\textbf{Keywords:} subjective surfaces; quasilinear parabolic equation; global classical solution; monotone reaction; Bernstein gradient estimate; parabolic Schauder theory; image segmentation.

\noindent\textbf{MSC 2020:} 35K59; 35K55; 35B35; 35B45; 68U10.

\section{Introduction}

Touching and dividing cell nuclei motivate this work. When nuclei touch, their visible boundaries may merge or become weak in the contact region; during division, the evolving nuclear geometry must be represented coherently across space and, for time-dependent data, across successive frames. The modeling objective is to describe touching and dividing nuclei by a single segmentation function while incorporating fixed information about neighboring nucleus candidates.

Subjective-surface and level-set methods represent the segmentation boundary as an isosurface of a segmentation function. The function evolves in an artificial segmentation time until the selected isosurface captures the nuclear region of interest. The classical subjective-surface model and the 4D segmentation method of Uba, Mikula, and Park evolve one segmentation function; in the 4D model the edge detector combines the original image intensity with a locally thresholded image intensity \cite{SartiMalladiSethian2000,umknskp2020,UbaMikulaPark2023}.

We propose a regularized subjective-surface equation for touching and dividing nuclei. The image-dependent coefficient $G$ guides the evolving surface toward nuclear boundaries, while a graph-based preprocessing criterion constructs a fixed nonnegative interaction weight $\Lambda\in C^{1+\alpha}(\overline\Omega)$ from neighboring candidate nuclei. The interaction weight $\Lambda$ enters a monotone reaction term that penalizes occupation in regions associated with neighboring nuclei.

The principal mathematical objective of the paper is to prove that the new model possesses a unique global classical solution. The continuum model is one PDE for one segmentation function, and the neighboring-nucleus information is encoded before the evolution in $\Lambda(x)$.

For a 3D+time image, we write
\[
        x=(x_1,x_2,x_3,\theta),
\]
where $\theta$ denotes real video time in the data set, whereas $t$ denotes artificial segmentation time.  The unknown remains a single function $u(t,x)$.  The 4D image is therefore a coefficient function on a four-dimensional computational domain.

The equation studied below is a strictly parabolic regularization of the subjective-surface flow.  The Evans--Spruck factor, in the spirit of the level-set regularization in \cite{EvansSpruck1991}, removes the singularity at $\nabla u=0$, and the strictly positive linear diffusion term $\nupar\Delta u$ gives a uniform ellipticity constant independent of the size of $\nabla u$.  The main theorem establishes a fully classical uniformly parabolic theory for fixed $\nupar>0$ and $\eps>0$.

The continuum theorem is formulated on a smooth bounded domain. In computation, rectangular voxel and doxel grids are equipped with the selected numerical boundary closure. The model combines microscopy intensity, local thresholding, and neighboring-nucleus coefficient functions on the segmentation domain.


The main theorem gives the well-posedness foundation for the proposed nuclei model: global Schauder-classical existence and uniqueness, preservation of $[0,1]$, finite-time $C^1$ and H\"older estimates, and quantitative $L^\infty$ stability for fixed $\nupar>0$ and $\eps>0$. The finite-time gradient estimate is obtained by combining gradient estimates near $\partial\Omega$ with interior gradient estimates, while monotonicity of the neighboring-nucleus reaction profile yields the fixed-coefficient supremum-norm nonexpansive estimate.

\section{Construction of the prescribed coefficient functions}
\label{sec:construction}

Let $d\in\{2,3,4\}$ and let $\Omega\subset\R^d$ denote the continuum image domain used in the analysis.  For a single 3D image, $d=3$ and $x=(x_1,x_2,x_3)$.  For a 3D+time data set, the natural data region is
\[
        Q_{\mathrm{data}}=\Omega_x\times(0,\theta_F),\qquad x=(y,\theta),\qquad y\in\Omega_x\subset\R^3,
\]
where $\theta$ is the real video time coordinate.  The classical theorem below uses a smooth bounded four-dimensional domain representing this data region, still denoted by $\Omega$.  The artificial segmentation time is always denoted by $t$.

\subsection{Image-dependent edge coefficient}

Let $I^0$ be the given image intensity on the data region, transferred after smoothing to the continuum domain $\Omega$.  The local thresholding used in the 4D segmentation algorithm is a preprocessing step that produces one thresholded image on the same 4D data region before the final smoothed coefficient is formed.  More precisely, for each real video time $\theta\in(0,\theta_F)$ one selects spatial centers $c_m^\theta\in\Omega_x$ and spatial balls
\[
        B_x(c_m^\theta,r)\subset\Omega_x,\qquad m=1,\dots,N_\theta.
\]
For each such ball define
\[
        \alpha_m^\theta=\min_{y\in B_x(c_m^\theta,r)} I^0(y,\theta),\qquad
        \beta_m^\theta=\max_{y\in B_x(c_m^\theta,r)} I^0(y,\theta),
\]
and
\[
        TH_m^\theta=\lambda\alpha_m^\theta+(1-\lambda)\beta_m^\theta,
        \qquad 0\le \lambda\le1.
\]
Then, for $y\in B_x(c_m^\theta,r)$,
\[
        I^{\mathrm{TH}}(y,\theta)=
        \begin{cases}
        \beta_m^\theta, & I^0(y,\theta)\ge TH_m^\theta,\\
        \alpha_m^\theta, & I^0(y,\theta)< TH_m^\theta.
        \end{cases}
\]
If the selected balls overlap, the computational segmentation method fixes the assignment criterion. If a voxel lies outside all selected balls, the method either leaves it unchanged or assigns it a background value. The 3D construction is the same with the real video time variable omitted.

A standard generalized edge detector has the form
\begin{equation}\label{eq:edge-template}
        G^0(x)=g\!\left(\delta_0\,\abs{\nabla(G_\sigma*I^0)(x)}+
        \vartheta_0\,\abs{\nabla(G_\sigma*I^{\mathrm{TH}})(x)}\right),
\end{equation}
where $G_\sigma$ is a smoothing kernel, $g$ is positive and nonincreasing, and $\delta_0,\vartheta_0\in[0,1]$ determine the relative influence of the original and locally thresholded image information \cite{umknskp2020,UbaMikulaPark2023}.  Formula \eqref{eq:edge-template} constructs the coefficient used in the evolution equation.

The piecewise-constant thresholded image $I^{\mathrm{TH}}$ is smoothed and converted into continuous functions, with a positive lower bound imposed to obtain the regular coefficient functions used in the continuum model.
  In 3D+time applications, this also includes smoothing in the real-time coordinate $\theta$ whenever frame-by-frame preprocessing would otherwise produce temporal discontinuities.
  This is consistent with the numerical practice of presmoothing the image. The resulting single edge coefficient is denoted below by $G$.

\subsection{Prescribed interaction weight from candidate graph data}
\label{subsec:interaction-weight}

The prescribed interaction weight is built before the PDE is evolved. Let $c_0$ denote the center of the current candidate, and let $\{c_j,r_j\}$ denote nearby candidate centers and scales obtained from the detection or seeding stage. A typical candidate-neighborhood criterion is
\begin{equation}\label{eq:graph-criterion}
j\in\cN(c_0) \quad\Longleftrightarrow\quad
|c_j-c_0|\le \kappa(r_j+r_0),
\end{equation}
with $\kappa>0$. This graph-based criterion selects the neighboring candidates that influence the current segmentation through the prescribed coefficient function.

In a 3D+time data set, the distance used in \eqref{eq:graph-criterion} must first be defined by specifying how spatial and temporal separations are combined. This choice determines the neighboring candidates used to construct the coefficient function. One common choice is to build the candidate graph frame-by-frame using only spatial centers $c_j^\theta\in\Omega_x$. Another choice is to use a scaled spacetime metric, for instance $|(y_j-y_0,\lambda_\theta(\theta_j-\theta_0))|$, where the conversion factor $\lambda_\theta>0$ is prescribed by the preprocessing method. After smoothing, the selected graph-based criterion defines a fixed function $\Lambda$ with the regularity required in Assumption~\ref{ass:data}. Throughout the analytical theorem, the candidate-neighborhood set $\cN(c_0)$ is assumed to be finite. Equivalently, if the implementation initially produces a larger or image-selected candidate set, the preprocessing step first selects a fixed finite subset. The coefficient $\Lambda$ is then constructed from this selected set.

For each $j\in\cN(c_0)$, let $\omega_j\in C^{2+\alpha}(\overline\Omega)$ be a nonnegative spatial graph weight, for example a smoothed Gaussian mask centered near the overlap region between the current candidate and the neighbor $j$.  For each candidate $j$, let
$\widehat u_j\in C^{2+\alpha}(\overline\Omega)$
be a prescribed function describing the region associated with that candidate. It may be obtained from seeding, thresholding, or an earlier segmentation step. Both $\omega_j$ and $\widehat u_j$ are determined before the evolution begins and remain fixed throughout the analysis.
  Since $\cN(c_0)$ is finite, the following sum is a finite $C^{2+\alpha}$ coefficient.  Define the interaction-density function
\begin{equation}\label{eq:topology-density}
        \Lambda(x)=\sum_{j\in\cN(c_0)}\omega_j(x)\,H_\eta(\widehat u_j(x)-q),
\end{equation}
where the decision level $q$ and transition width $\eta$ are specified below.  Thus $\Lambda$ is large in regions where graph-neighboring candidates have substantial prescribed candidate support.  In 3D+time applications, framewise graph data and candidate neighborhoods must be smoothed in the real-time coordinate before they are regarded as a coefficient satisfying the analytical regularity hypothesis.  If $\omega_j,\widehat u_j\in C^{2+\alpha}(\overline\Omega)$ in the finite sum above, then the construction gives the stronger property $\Lambda\in C^{2+\alpha}(\overline\Omega)$, although the PDE theorem assumes $C^{1+\alpha}$ regularity of $\Lambda$.  The choice $\Lambda\equiv0$ or $\mu=0$ recovers the regularized subjective-surface flow without the graph-guided reaction term.

Choose a decision level $q\in(0,1)$ and a transition width
\[
        0<\eta<\min\{q,1-q\}.
\]
Let ${H}\in C^\infty(\R)$ satisfy
\[
        0\le H\le1,
        \qquad H'\ge0,
        \qquad H(s)=0\ \text{for }s\le-1,
        \qquad H(s)=1\ \text{for }s\ge1,
\]
and set
\begin{equation}\label{eq:profiles}
        H_\eta(s)=H(s/\eta),\qquad
        h_\eta(s)=H_\eta'(s)=\eta^{-1}H'(s/\eta),
        \qquad
        P_\eta(s)=\int_{-\infty}^{s}H_\eta(r)\,\dd r .
\end{equation}
Then $H_\eta$ is a smooth nondecreasing transition from $0$ to $1$, while
$h_\eta\ge0$ is supported in the transition band $|s|<\eta$. The modeling interpretation of this threshold, together with a practical discussion of the choice of $q$, is given in Appendix~\ref{app:model-interpretation}.  The monotonicity condition $H_\eta'\ge0$ makes the reaction term proper in the unknown and yields the comparison principle and the nonexpansive stability theorem.  The interaction energy is
\begin{equation}\label{eq:scalar-overlap-energy}
        \cE^\eta(u)=\int_\Omega \Lambda(x)P_\eta(u(x)-q)\,\dd x.
\end{equation}
Its $L^2$-gradient density for smooth functions is
\begin{equation}\label{eq:topology-reaction}
        \cT_\eta(x,u)=\Lambda(x)H_\eta(u-q).
\end{equation}
Indeed,
\begin{equation}\label{eq:scalar-variation}
        D\cE^\eta(u)[\phi]
        =\int_\Omega \cT_\eta(x,u(x))\phi(x)\,\dd x.
\end{equation}
The monotone reaction subflow $u_t=-\mu\cT_\eta(x,u)$ dissipates \eqref{eq:scalar-overlap-energy}. The term
$\cT_\eta$ is a smooth decision-level penalty: it vanishes below
$q-\eta$, increases smoothly for $u\in[q-\eta,q+\eta]$, and remains active
above $q+\eta$. Thus it penalizes occupation in regions selected by the prescribed interaction weight, both in saturated regions and near the final isosurface.

\section{The regularized subjective-surface model}
\label{sec:model}

Let $\alpha\in(0,1)$ and let $\Omega\subset\R^d$, $d\in\{2,3,4\}$, be a bounded connected image domain with $C^{4+\alpha}$ boundary. Numerical implementations on rectangular 3D and 4D grids impose the corresponding discrete boundary condition.

For $p\in\R^d$ and $\eps>0$ define
\[
        A_\eps(p)=\sqrt{\eps^2+|p|^2}.
\]
We study the initial-boundary value problem
\begin{equation}\label{eq:main-pde}
\boxed{
\begin{cases}
\partial_t u
=\nupar\Delta u+
A_\eps(\nabla u)\operatorname{div}\!\left(G(x)\dfrac{\nabla u}{A_\eps(\nabla u)}\right)
-\mu\cT_\eta(x,u),
& (t,x)\in(0,\infty)\times\Omega,\\[2mm]
u=0,& (t,x)\in(0,\infty)\times\partial\Omega,\\[1mm]
u(0,x)=u^0(x),& x\in\Omega.
\end{cases}}
\end{equation}
The boundary condition in \eqref{eq:main-pde} is the homogeneous Dirichlet condition commonly used in the subjective-surface image-segmentation setting.

Define
\begin{equation}\label{eq:operator}
\cK_{\eps,\nupar}[v]
=\nupar\Delta v+
A_\eps(\nabla v)\operatorname{div}\!\left(G(x)\frac{\nabla v}{A_\eps(\nabla v)}\right).
\end{equation}
Then \eqref{eq:main-pde} is $u_t=\cK_{\eps,\nupar}[u]-\mu\cT_\eta(x,u)$ with homogeneous boundary data.

\begin{assumption}[Data for the positive-time classical Dirichlet problem]\label{ass:data}
Assume that:
\begin{enumerate}[label=\textup{(A\arabic*)},leftmargin=2.5em]
\item $G\in C^{3+\alpha}(\overline\Omega)$ and there are constants $0<g_*\le g^*$ such that $g_*\le G(x)\le g^*$ on $\overline\Omega$;
\item $\Lambda\in C^{1+\alpha}(\overline\Omega)$, $\Lambda\ge0$, $\mu\ge0$, and $q,\eta,H,H_\eta,h_\eta$ satisfy \eqref{eq:profiles} with $0<\eta<\min\{q,1-q\}$; in particular $H_\eta'\ge0$, and this monotonicity is a structural assumption used in comparison and stability;
\item $u^0\in C^{2+\alpha}(\overline\Omega)$, $0\le u^0\le1$ in $\overline\Omega$, $u^0=0$ on $\partial\Omega$, and the first compatibility condition
\[
        \cK_{\eps,\nupar}[u^0]-\mu\cT_\eta(x,u^0)=0
        \qquad\text{on }\partial\Omega
\]
holds;
\item $\eps>0$ and $\nupar>0$ are fixed;
\end{enumerate}
\end{assumption}

Here and below, the structural data consist of the domain $\Omega$, the
dimension $d$, the exponent $\alpha$, the parameters $\eps$, $\nupar$,
$\mu$, $q$, and $\eta$, the bounds $g_*$ and $g^*$, and the prescribed
functions $G$, $\Lambda$, and $H_\eta$ together with the norms and
regularity specified in Assumption~\ref{ass:data}.

The compatibility condition included in Assumption~\ref{ass:data} is
\begin{equation}\label{eq:compatibility}
        \cK_{\eps,\nupar}[u^0](x)-\mu\cT_\eta(x,u^0(x))=0,
        \qquad x\in\partial\Omega,
\end{equation}
with derivatives taken from inside $\Omega$.

\begin{definition}[Positive-time and Schauder-classical solutions]\label{def:classical}
For $0<T<\infty$, a positive-time classical solution of \eqref{eq:main-pde} on $[0,T]$ is a function
\[
        u\in C([0,T];C^1(\overline\Omega))\cap C^{1,2}((0,T]\times\overline\Omega)
\]
that satisfies the PDE pointwise for $t>0$, attains the initial data, and satisfies $u(t,x)=0$ for $t>0$ and $x\in\partial\Omega$.  It is H\"older-classical with exponent $\beta\in(0,1)$ on $[s,T]$, $s>0$, if
\[
        u\in C^{1+\beta/2,2+\beta}([s,T]\times\overline\Omega)
\]
with $u(t,x)=0$ for $t>0$ and $x\in\partial\Omega$.  It is Schauder-classical up to the initial time with exponent $\beta$ if
\[
        u\in C^{1+\beta/2,2+\beta}([0,T]\times\overline\Omega).
\]  A global positive-time classical solution is one whose restriction to every finite time interval is positive-time classical.
\end{definition}

\begin{remark}[Parabolic H\"older convention]\label{rem:holder-convention}
Throughout the paper, $C^{1+\beta/2,2+\beta}(Q)$ denotes the standard
anisotropic parabolic H\"older space.  Thus $u$, $u_t$, $D_xu$, and
$D_x^2u$ are continuous, $u_t$ and $D_x^2u$ are H\"older continuous with
space--time exponents $(\beta/2,\beta)$, and $D_xu$ has temporal H\"older
exponent $(1+\beta)/2$ together with spatial $C^{1+\beta}$ regularity.
Equivalent standard norms may be used.  In particular, the estimate
\[
\nabla u \in C^{\beta/2,\beta}
\]
proved below is intentionally weaker than the final regularity result. Its purpose is to provide the H\"older continuity needed to apply the linear Schauder estimates.
\end{remark}

\begin{theorem}[Global classical well-posedness]\label{thm:global}
Under Assumption~\ref{ass:data}, problem \eqref{eq:main-pde} has a unique global classical solution whose restriction to every finite time interval is Schauder-classical. The solution satisfies
\begin{equation}\label{eq:range}
        0\le u(t,x)\le1,
        \qquad t\ge0,
        \quad x\in\overline\Omega.
\end{equation}
Moreover, for every $T>0$ there is a finite constant $C_T$, depending only on $T$, $\Omega$, $d$, $\eps$, $\nupar$, $g_*$, $g^*$, $\norm{G}_{C^{3+\alpha}(\overline\Omega)}$, $\norm{\Lambda}_{C^{1+\alpha}(\overline\Omega)}$, the parameters $\mu,q,\eta,H$, $\norm{u^0}_{C^{2+\alpha}}$, such that
\begin{equation}\label{eq:finite-time-c1}
        \sup_{0\le t\le T}\norm{u(t)}_{C^1(\overline\Omega)}\le C_T.
\end{equation}
For every $T>0$ there are $\beta_T\in(0,\alpha]$ and a finite constant $C_T^{\mathrm{Sch}}$ such that
\begin{equation}\label{eq:positive-holder}
        \norm{u}_{C^{1+\beta_T/2,2+\beta_T}([0,T]\times\overline\Omega)}
        \le C_T^{\mathrm{Sch}}.
\end{equation}
\end{theorem}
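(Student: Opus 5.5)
The proof follows the Leray--Schauder / continuation scheme for quasilinear uniformly parabolic Dirichlet problems (Ladyzhenskaya--Solonnikov--Ural'tseva, Lieberman). The first step is to write the operator in nondivergence form. Expanding the divergence gives
\[
\cK_{\eps,\nupar}[u]=a^{ij}(x,\nabla u)\partial_{ij}u+\nabla G\cdot\nabla u,\qquad
a^{ij}(x,p)=\nupar\delta_{ij}+G(x)\Bigl(\delta_{ij}-\frac{p_ip_j}{\eps^2+|p|^2}\Bigr).
\]
Since $\eps>0$, this gives $\nupar|\xi|^2\le a^{ij}\xi_i\xi_j\le(\nupar+g^*)|\xi|^2$ for every $p$. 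So the equation is uniformly parabolic with ellipticity independent of $\nabla u$, and its lower-order term has linear growth in $p$. Short-time existence of a $C^{1+\alpha/2,2+\alpha}$ solution then follows from linearization and the contraction/implicit function theorem, using the compatibility condition (A3). Uniqueness and the $L^\infty$ comparison follow from the classical comparison principle: the difference of two solutions satisfies a linear equation whose zeroth-order coefficient $-\mu\Lambda\int_0^1 H_\eta'\,\dd s\le0$ is nonpositive. The range bound \eqref{eq:range} uses the constants $0$ and $1$ as barriers. Indeed $0$ is an exact solution, since $H_\eta(-q)=0$ because $\eta<q$. The constant $1$ is a supersolution, since $-\mu\Lambda H_\eta(1-q)\le0$, and it dominates the boundary data. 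Global existence then reduces to a priori bounds in $C^{1+\beta/2,2+\beta}([0,T]\times\overline\Omega)$ that are uniform on any existence interval contained in $[0,T]$.

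The key step is the finite-time gradient bound \eqref{eq:finite-time-c1}, which I split into a boundary estimate and an interior estimate.

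\emph{Boundary gradient.} I would use the barrier method. Let $\delta(x)=\operatorname{dist}(x,\partial\Omega)$, which is $C^{2}$ near $\partial\Omega$ because $\partial\Omega\in C^{4+\alpha}$. Take the barriers $w^\pm=\pm k\delta(1-\delta/(2\rho))$ in a collar of width $\rho$, with $k$ large, so that $w^+$ is a supersolution and $w^-$ a subsolution there. This works because $a^{ij}\partial_{ij}w^+\le -\nupar k/\rho+Ck$ after choosing $\rho$ small. The remaining terms are bounded by $|\nabla G|k$ and by $\mu\|\Lambda\|_\infty$, which is a bounded reaction, and they are absorbed once $\rho$ is chosen small depending on $\nupar,\|G\|_{C^1}$. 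On the inner boundary of the collar, $k\rho/2\ge1\ge|u|$. Comparison with $u$, which lies between $0$ and $1$, then bounds $|\partial_\nu u|$ on $\partial\Omega$ by $k$. Alternatively, for $u\ge0$ only the upper barrier is needed.

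\emph{Interior/global gradient.} I would apply a Bernstein argument to $w=|\nabla u|^2$, or to $w=\phi(u)|\nabla u|^2$ with $\phi$ chosen to absorb the bad terms. Differentiating the equation in $x_k$ produces terms $\partial_k a^{ij}\,\partial_{ij}u\,\partial_ku$. Terms with $x$-derivatives of $G$ are bounded by $C|D^2u||\nabla u|$ and are absorbed by the good term $\nupar|D^2u|^2$. The $p$-derivative terms $\partial_{p_l}a^{ij}\partial_{lk}u\,\partial_{ij}u\,\partial_ku$ are homogeneous of degree $-1$ in $p$ times $|p|$, so they are also $O(|D^2u|^2)$ with a bounded factor. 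This is the delicate point: they are not small relative to $\nupar$. To handle them I would use the standard structure of the mean-curvature-type part together with the factor $\phi(u)$, following Lieberman's Chapter XI conditions. These conditions hold here because $a^{ij}$ depends on $p$ only through a bounded, smooth, $0$-homogeneous matrix for $|p|\ge\eps$. The reaction contributes $\mu\partial_k(\Lambda H_\eta(u-q))\partial_ku\ge -C|\nabla u|-C|\nabla u|^2$, using $H_\eta'\ge0$ and $\Lambda\in C^1$. A maximum principle for $e^{-Ct}w$ then yields $\sup|\nabla u|\le C_T\max(\|\nabla u^0\|_\infty,\sup_{\partial\Omega}|\nabla u|,1)$. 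This is the main obstacle, and if a direct Bernstein computation fails, the fallback is to cite Lieberman Thm.~11.16 or LSU Thm.~VI.3.1 after verifying their structure conditions.

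With $\nabla u$ bounded, the coefficients $a^{ij}(x,\nabla u)$ are bounded and uniformly elliptic. Ladyzhenskaya--Ural'tseva / Lieberman give Hölder continuity $\nabla u\in C^{\beta/2,\beta}$ up to the boundary with some $\beta_T\in(0,\alpha]$, as flagged in Remark~\ref{rem:holder-convention}. Linear Schauder theory, using (A3) and $G\in C^{3+\alpha}$, $\Lambda\in C^{1+\alpha}$, then gives \eqref{eq:positive-holder}. A Leray--Schauder fixed point or continuation argument, with constants depending only on the listed data, extends the local solution to $[0,T]$ for every $T$, and uniqueness glues these into the global solution.
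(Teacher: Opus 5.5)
Your overall route is the same as the paper's. You use constant barriers for $0\le u\le1$, a distance-function barrier for the normal derivative on $\partial\Omega$, and a Bernstein maximum principle for $e^{-Ct}(w+1)$. You then apply Lieberman's H\"older-gradient estimate, linear Schauder theory, and continuation.

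The gap is in the step you call the main obstacle. You leave the interior gradient bound unresolved. You propose to repair it with a weight $\phi(u)$ or by citing Lieberman Chapter XI or LSU, without checking their hypotheses. The missing observation is that the term you worry about is not a Hessian-squared term where it matters. With $w=\tfrac12|\nabla u|^2$ one has $w_\ell=u_ku_{k\ell}$, hence
\[
u_k\,\partial_{p_\ell}a^{ij}(x,\nabla u)\,u_{\ell k}\,u_{ij}
=\bigl(\partial_{p_\ell}a^{ij}(x,\nabla u)\,u_{ij}\bigr)\,w_\ell .
\]
This is a transport term in $w$, just like $b^iu_{ik}u_k=b^iw_i$. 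Both vanish wherever $\nabla w=0$, in particular at an interior maximum of $e^{-Ct}(w+1)-\delta t$. So their size relative to $\nupar$ never matters. The remaining terms are:
\begin{itemize}
\item $-a^{ij}u_{ki}u_{kj}\le-\nupar|D^2u|^2$;
\item $C|\nabla u||D^2u|$ from $D_xa^{ij}$, absorbed by Young's inequality;
\item $C|\nabla u|^2$ from $D^2G$;
\item $C|\nabla u|$ from $\nabla\Lambda$;
\item $-\mu\Lambda h_\eta(u-q)|\nabla u|^2\le0$.
\end{itemize}
Together these give $w_t-a^{ij}w_{ij}\le C_0(1+w)$ at every point where $\nabla w=0$, which closes the estimate. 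This is exactly how the paper argues. No weight $\phi(u)$, structure conditions, or fallback citation is needed. You should also justify differentiating the equation, since this uses third spatial derivatives. The paper does this with an interior difference-quotient bootstrap, using the boundary gradient bound to ensure the relevant maximum is interior.

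Two smaller points. First, in your collar comparison you must also check $u^0\le w^+$ on the initial slice. Since $u^0=0$ on $\partial\Omega$, this holds after enlarging $k$ by a multiple of $\norm{\nabla u^0}_{L^\infty}$. This is harmless because the supersolution inequality is linear in $k$ and the reaction term has the favorable sign. The paper makes the same adjustment by enlarging $M$ in $W=M(1-e^{-\kappa\rho})$. Second, you apply the global linear Dirichlet Schauder estimate directly on $[0,T]$, using the first-order compatibility condition (A3). This is a legitimate and slightly shorter alternative to the paper's route, which combines a time-cutoff Schauder estimate on $[\tau,T]$ with the local solution near $t=0$ via a patching lemma. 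Your version relies on the H\"older-gradient estimate holding up to $t=0$. Lieberman's theorem supplies this, because the parabolic-boundary data belong to $H_2$.
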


\begin{theorem}[Nonexpansive stability with respect to the initial data]
\label{thm:stability}
Fix $G$, $\Lambda$, $\mu$, $q$, $\eta$, $H$, $\eps$, $\nupar$, and $\Omega$
as in Assumption~\ref{ass:data}. Let $u$ and $v$ be the global
positive-time classical solutions of \eqref{eq:main-pde} associated with
two initial functions $u^0$ and $v^0$ satisfying
Assumption~\ref{ass:data}. Then, for every $t\geq 0$,
\begin{equation}\label{eq:stability}
    \norm{u(t)-v(t)}_{L^\infty(\Omega)}
    \leq
    \norm{u^0-v^0}_{L^\infty(\Omega)}.
\end{equation}
In particular, for fixed coefficient functions, the regularized flow is
nonexpansive in the supremum norm and therefore stable with respect to
perturbations of the initial data.
\end{theorem}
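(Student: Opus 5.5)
The plan is to prove the $L^\infty$ contraction by a comparison argument applied to the difference $w=u-v$, which solves a linear parabolic equation with continuous coefficients. By Theorem~\ref{thm:global}, on each interval $[0,T]$ both solutions are Schauder-classical and have bounded $C^1$ norms. Write the operator as
\[
\cK_{\eps,\nupar}[v]=a^{ij}(x,\nabla v)\partial_{ij}v+b(x,\nabla v),
\]
with
\[
a^{ij}(x,p)=(\nupar+G)\delta_{ij}-G\,\frac{p_ip_j}{\eps^2+|p|^2},\qquad b(x,p)=\nabla G\cdot p .
\]
Both coefficients are smooth in $p$. Subtracting the two equations and using the mean value theorem along $s\mapsto s\nabla u+(1-s)\nabla v$ gives
\[
w_t=a^{ij}(x,\nabla u)\partial_{ij}w+\tilde b^k\partial_k w-c(t,x)\,w .
\]
Here
\[
\tilde b^k=\int_0^1\big[\partial_{p_k}a^{ij}(x,\cdot)\,\partial_{ij}v+\partial_{p_k}b(x,\cdot)\big]\dd s
\]
is continuous and bounded on $[s_0,T]\times\overline\Omega$, and so on all of $[0,T]\times\overline\Omega$ by the Schauder bound \eqref{eq:positive-holder}. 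The zeroth-order coefficient is
\[
c=\mu\Lambda(x)\int_0^1 h_\eta\big(\theta u+(1-\theta)v-q\big)\dd\theta\ge0,
\]
and it is nonnegative by (A2), that is, by $\Lambda\ge0$ and $H_\eta'\ge0$. Uniform ellipticity holds since $a^{ij}\xi_i\xi_j\ge\nupar|\xi|^2$.

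Set $M=\norm{u^0-v^0}_{L^\infty}$. The aim is $w\le M$; the lower bound $w\ge -M$ follows by exchanging $u$ and $v$. Let $z=w-M$ and note that $-cw=-cz-cM\le -cz$. Then $z$ satisfies
\[
z_t-a^{ij}\partial_{ij}z-\tilde b^k\partial_k z+cz\le0 .
\]
It also satisfies $z\le0$ at $t=0$ and $z=-M\le0$ on $\partial\Omega$. The weak maximum principle for linear parabolic operators with $c\ge0$ and bounded coefficients then gives $z\le0$ on $[0,T]\times\overline\Omega$.

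For a self-contained argument I would use $z_\delta=z-\delta t$. Suppose $z_\delta$ attains a positive maximum at $(t_0,x_0)$ with $t_0>0$. Then $x_0$ is interior because $z_\delta<0$ on the boundary. At that point $\partial_t z_\delta\ge0$, $\nabla z_\delta=0$, and $D^2z_\delta\le0$, while $c z_\delta\ge0$. This gives $\partial_t z_\delta\le -\delta<0$, a contradiction. Letting $\delta\to0$ and then $T\to\infty$ yields \eqref{eq:stability}.

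The main obstacle is regularity near $t=0$. The linear coefficients must be defined and continuous up to $t=0$ so that the maximum principle applies on $[0,T]$, not only on $[s,T]$. Theorem~\ref{thm:global} supplies Schauder-classical regularity up to the initial time, so $\partial_{ij}v$ is bounded. If only the positive-time version were available, I would apply the argument on $[s,T]$ and use $u,v\in C([0,T];C^1)$ to pass $s\to0$.
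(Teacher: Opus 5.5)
Your proposal is correct and follows essentially the paper's route: you linearize $u-v$, use $\Lambda\ge0$ and $H_\eta'\ge0$ to obtain a nonnegative zeroth-order coefficient, and compare with the constants $\pm M$ through the $\delta t$-perturbed maximum principle, which is what Lemma~\ref{lem:stability-local} and Lemma~\ref{lem:comparison} do. The differences are cosmetic. The paper averages the principal coefficients along $v+\lambda(u-v)$ and starts the comparison at a small time $\tau_\delta$ with margin $M+\delta$, instead of invoking Schauder regularity up to $t=0$. Your pointwise argument in fact needs only positive-time classical regularity together with $u,v\in C([0,T];C^1(\overline\Omega))$, because the drift term vanishes at the maximum point whatever the size of its coefficient, so the obstacle you flag near $t=0$ does not actually arise.
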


The stability estimate compares solutions with the same prescribed coefficient functions $G$ and $\Lambda$; only the initial data are allowed to vary.

\begin{proposition}[Variational identity for the reaction term]\label{prop:energy}
For smooth functions, $\cT_\eta$ is the $L^2$-gradient density of \eqref{eq:scalar-overlap-energy}.  Consequently, along the reaction-only subflow $u_t=-\mu\cT_\eta(x,u)$,
\begin{equation}\label{eq:energy-dissipation}
        \frac{\dd}{\dd t}\cE^\eta(u(t))
        =-\mu\int_\Omega |\cT_\eta(x,u(t,x))|^2\,\dd x\le0.
\end{equation}
For a smooth solution of the full PDE \eqref{eq:main-pde},
\[
        \frac{\dd}{\dd t}\cE^\eta(u(t))
        =\int_\Omega\cT_\eta(x,u)\cK_{\eps,\nupar}[u] \dd x
        -\mu\int_\Omega |\cT_\eta(x,u)|^2\dd x,
\]
which gives the corresponding energy-balance identity for the full image-driven flow.
\end{proposition}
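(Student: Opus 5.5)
The plan is a direct computation of the first variation of $\cE^\eta$, followed by the chain rule in time. For smooth $u$ and $\phi$ (say $u,\phi\in C^1(\overline\Omega)$, which is more than enough), I will differentiate $s\mapsto\cE^\eta(u+s\phi)$ at $s=0$. Since $P_\eta'=H_\eta$ by \eqref{eq:profiles}, and $H_\eta$ is bounded by $1$, the difference quotients
\[
s^{-1}\bigl(P_\eta(u+s\phi-q)-P_\eta(u-q)\bigr)
\]
are bounded by $|\phi|$ (mean value theorem), and they converge pointwise to $H_\eta(u-q)\phi$. Because $\Lambda\ge0$ is bounded and $\Omega$ is bounded, dominated convergence gives \eqref{eq:scalar-variation}, which is exactly the statement that $\cT_\eta(x,u)=\Lambda(x)H_\eta(u-q)$ is the $L^2$-gradient density. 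I note that $P_\eta(s)$ is finite: it vanishes for $s\le-\eta$ and grows at most linearly, so $\cE^\eta$ is well defined on bounded functions.

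Next, for a smooth curve $t\mapsto u(t)$ (for instance $u\in C^1$ in $t$ with values in $C(\overline\Omega)$), I will show
\[
\frac{\dd}{\dd t}\cE^\eta(u(t))=\int_\Omega\cT_\eta(x,u)\,u_t\,\dd x .
\]
This is the same dominated-convergence argument with $\phi$ replaced by the difference quotient of $u$ in $t$. Those quotients are uniformly bounded on compact time intervals by the continuity of $u_t$ on $[t_0-\delta,t_0+\delta]\times\overline\Omega$.

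Substituting $u_t=-\mu\cT_\eta(x,u)$ gives $-\mu\int_\Omega|\cT_\eta|^2\,\dd x$, which is $\le0$ because $\mu\ge0$; this is \eqref{eq:energy-dissipation}. For the reaction-only subflow, which is an ODE at each $x$ with a smooth, globally Lipschitz right-hand side, solutions exist and are smooth in $t$, so the hypothesis is met. Substituting instead $u_t=\cK_{\eps,\nupar}[u]-\mu\cT_\eta(x,u)$ for a smooth solution of \eqref{eq:main-pde} (for example, the Schauder-classical solution on $[s,T]$ from Theorem~\ref{thm:global}, where $u_t$ is continuous on $[s,T]\times\overline\Omega$) yields the stated energy-balance identity directly.

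There is no real obstacle here. The only point needing care is justifying the exchange of the derivative with the integral. This rests on the boundedness of $H_\eta$ and $\Lambda$ together with the continuity of $u_t$ up to the boundary; at $t=0$ a one-sided derivative is obtained the same way, using the regularity up to the initial time from \eqref{eq:positive-holder}. I will state explicitly that "smooth" means at least $C^1$ in time with continuous $u_t$ on $[0,T]\times\overline\Omega$.
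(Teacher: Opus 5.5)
Your proposal is correct and follows the same route as the paper. You differentiate $\cE^\eta$ by the chain rule, using $P_\eta'=H_\eta$, to obtain \eqref{eq:scalar-variation} and $\frac{\dd}{\dd t}\cE^\eta(u(t))=\int_\Omega\cT_\eta(x,u)\,u_t\,\dd x$, and then substitute the reaction-only equation and the full equation \eqref{eq:main-pde}. Your dominated-convergence justification uses only $0\le H_\eta\le1$, the boundedness of $\Lambda$, and the continuity of $u_t$, and is simply a more explicit version of the paper's one-line appeal to the chain rule and the boundedness of $h_\eta$.
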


\begin{lemma}[Boundary-compatible preparation of seed data]\label{lem:seed-preparation}
Let $v\in C^{2+\alpha}(\overline\Omega)$ satisfy $0\le v\le1$.  For every sufficiently small $r>0$ there exists a cutoff $\chi_r\in C^{2+\alpha}(\overline\Omega)$ such that
\[
        0\le \chi_r\le1,\qquad
        \chi_r=0 \text{ if } \operatorname{dist}(x,\partial\Omega)\le r,
        \qquad
        \chi_r=1 \text{ if } \operatorname{dist}(x,\partial\Omega)\ge 2r .
\]
Set $u_r^0=\chi_r v$. Then
$u_r^0\in C^{2+\alpha}(\overline{\Omega})$,
$0\leq u_r^0\leq 1$,
$u_r^0=0$ on $\partial\Omega$, and
$u_r^0$ satisfies the compatibility condition
\eqref{eq:compatibility}. Moreover,
\[
u_r^0=v
\quad\text{on}\quad
\left\{
x\in\Omega:
\operatorname{dist}(x,\partial\Omega)\geq 2r
\right\}.
\]
Thus, for every sufficiently small fixed $r>0$, the function $u_r^0$
provides a boundary-compatible initial datum that agrees with $v$
away from the boundary.
\end{lemma}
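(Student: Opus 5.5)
The plan is to build $\chi_r$ from a regularized distance function and then read off each claimed property of $u_r^0=\chi_r v$. The only nontrivial one is the compatibility condition, which will follow because $u_r^0$ vanishes identically in a full boundary layer.

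\textbf{Construction of $\chi_r$.} Since $\partial\Omega$ is $C^{4+\alpha}$, the distance function $\delta(x)=\operatorname{dist}(x,\partial\Omega)$ is $C^{4+\alpha}$ on the closed collar $\{x\in\overline\Omega:\delta(x)\le r_0\}$ for some $r_0>0$ depending only on $\Omega$ (Gilbarg--Trudinger, Lemma~14.16). Fix $\psi\in C^\infty(\R)$ with $0\le\psi\le1$, $\psi(s)=0$ for $s\le1$, and $\psi(s)=1$ for $s\ge2$. For $0<r<r_0/2$ define
\[
\chi_r(x)=\begin{cases}\psi(\delta(x)/r), & \delta(x)\le r_0,\\ 1, & \delta(x)> r_0.\end{cases}
\]
The two pieces agree on the overlap $2r\le\delta\le r_0$, where both equal $1$. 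Hence $\chi_r\in C^{4+\alpha}(\overline\Omega)\subset C^{2+\alpha}(\overline\Omega)$, and the stated bounds and level-set properties hold by construction. This is where "sufficiently small $r$" enters: we need $2r<r_0$ so that $\delta$ is smooth wherever $\chi_r$ is nonconstant.

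\textbf{Elementary properties of $u_r^0$.} Because $C^{2+\alpha}(\overline\Omega)$ is an algebra, $u_r^0=\chi_r v\in C^{2+\alpha}(\overline\Omega)$. The bounds $0\le\chi_r,v\le1$ give $0\le u_r^0\le1$. On $\partial\Omega$ we have $\delta=0\le r$, so $\chi_r=0$ and therefore $u_r^0=0$ there. On $\{\delta\ge 2r\}$ we have $\chi_r=1$, so $u_r^0=v$.

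\textbf{Compatibility condition.} This is the only point needing care, although it is straightforward. On the relatively open boundary layer $U_r=\{x\in\overline\Omega:\delta(x)<r\}$, which contains $\partial\Omega$, we have $u_r^0\equiv0$. Consequently $\nabla u_r^0=0$ and $D^2u_r^0=0$ on $U_r$, including the one-sided derivatives at $\partial\Omega$ taken from inside $\Omega$.

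Expanding the operator \eqref{eq:operator},
\[
\cK_{\eps,\nupar}[w]=\nupar\Delta w+G\,\Delta w+\nabla G\cdot\nabla w-G\,\frac{\nabla w\cdot D^2w\,\nabla w}{A_\eps(\nabla w)^2}.
\]
This expansion is legitimate because $A_\eps\ge\eps>0$. Every term vanishes when $\nabla w=0$ and $D^2w=0$, so $\cK_{\eps,\nupar}[u_r^0]=0$ on $\partial\Omega$.

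For the reaction term, $u_r^0=0$ on $\partial\Omega$ gives $u_r^0-q=-q<-\eta$, because $\eta<q$. Since $H_\eta(s)=H(s/\eta)=0$ for $s\le-\eta$, this yields $\cT_\eta(x,u_r^0(x))=\Lambda(x)H_\eta(-q)=0$ on $\partial\Omega$. Together these two facts give \eqref{eq:compatibility}, which completes the argument.
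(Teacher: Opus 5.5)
Your proof is correct and follows the same route as the paper. You build a cutoff from the distance function, use that all first and second derivatives of $u_r^0$ vanish in a boundary layer so that $\cK_{\eps,\nupar}[u_r^0]=0$ on $\partial\Omega$, and get $H_\eta(-q)=0$ from $\eta<q$; you just make the cutoff construction and the expansion of the operator more explicit than the paper does.
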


\begin{proof}
The existence of such cutoffs follows from the smoothness of $\partial\Omega$ and the local boundary-coordinate representation of the distance to the boundary, after the usual smoothing of the distance cutoff.  The range and boundary condition are immediate.  Since $u^0_r$ is identically zero in a neighborhood of the boundary, all first and second spatial derivatives of $u^0_r$ vanish on $\partial\Omega$ from the interior.  Hence $\cK_{\eps,\nupar}[u^0_r]=0$ on $\partial\Omega$.  Also $u^0_r=0$ on $\partial\Omega$ and $0<\eta<q$, hence $-q<-\eta$; since $H_\eta(s)=0$ for $s\le-\eta$, we have $H_\eta(u^0_r-q)=H_\eta(-q)=0$.  Therefore $\cT_\eta(x,u^0_r)=0$ on $\partial\Omega$, and \eqref{eq:compatibility} follows.
\end{proof}

\section{Structural facts}
\label{sec:structure}

The geometric operator \eqref{eq:operator} can be written in nondivergence form.  If $p\in\R^d$ and
\begin{equation}\label{eq:aij}
        a^{ij}(x,p)=\nupar\delta_{ij}+G(x)\left(\delta_{ij}-\frac{p_i p_j}{\eps^2+|p|^2}\right),
\end{equation}
then a direct calculation gives
\begin{equation}\label{eq:nondivergence}
        \cK_{\eps,\nupar}[v]
        =a^{ij}(x,\nabla v)v_{ij}+\nabla G(x)\cdot\nabla v.
\end{equation}
Here and below repeated spatial indices are summed from $1$ to $d$.

\begin{lemma}[Uniform parabolicity and global coefficient bounds]\label{lem:ellipticity}
For every $x\in\overline\Omega$ and $p,\xi\in\R^d$,
\begin{equation}\label{eq:ellipticity}
        \nupar |\xi|^2\le a^{ij}(x,p)\xi_i\xi_j\le (\nupar+g^*)|\xi|^2.
\end{equation}
Moreover, for $m=1,2$, the derivatives $D_p^m a^{ij}(x,p)$ are bounded on $\overline\Omega\times\R^d$ by constants depending only on $d$, $m$, $\eps$, and $g^*$.  The $x$-derivatives up to order two are bounded by $\norm{G}_{C^{2}(\overline\Omega)}$, and the H\"older moduli used in the positive-time estimates are controlled by $\norm{G}_{C^{3+\alpha}(\overline\Omega)}$.  In particular, these coefficient bounds do not require an a priori bound on $|p|$.
\end{lemma}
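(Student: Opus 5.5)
The proof of Lemma~\ref{lem:ellipticity} is a direct computation on the explicit formula \eqref{eq:aij}. The plan is to isolate the matrix
\[
P(p)=I-\frac{p\otimes p}{\eps^2+|p|^2},
\]
so that $a^{ij}(x,p)=\nupar\delta_{ij}+G(x)P_{ij}(p)$. Everything then follows from spectral bounds on $P$ and smoothness bounds on the map $p\mapsto p/A_\eps(p)$, combined with (A1).

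For the ellipticity bounds \eqref{eq:ellipticity}, I compute
\[
P(p)\xi\cdot\xi=|\xi|^2-\frac{(p\cdot\xi)^2}{\eps^2+|p|^2}.
\]
Cauchy--Schwarz gives $(p\cdot\xi)^2\le|p|^2|\xi|^2$. Hence
\[
|\xi|^2\ge P(p)\xi\cdot\xi\ge |\xi|^2\Bigl(1-\frac{|p|^2}{\eps^2+|p|^2}\Bigr)=\frac{\eps^2|\xi|^2}{\eps^2+|p|^2}\ge 0 .
\]
Since $0<g_*\le G\le g^*$, the quadratic form $G(x)P(p)\xi\cdot\xi$ lies in $[0,g^*|\xi|^2]$. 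Adding $\nupar|\xi|^2$ gives both inequalities in \eqref{eq:ellipticity}. I will note that the lower bound uses only $\nupar$, not $\eps$ or $g_*$, which is why it is uniform in $p$.

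For the $p$-derivatives, I write
\[
\frac{p_ip_j}{\eps^2+|p|^2}=w_i(p)w_j(p),\qquad w(p)=\frac{p}{A_\eps(p)}.
\]
By scaling, $w(p)=W(p/\eps)$ with $W(z)=z/\sqrt{1+|z|^2}$. The function $W$ is smooth, $|W|\le1$, and each derivative satisfies $|D^mW(z)|\le C_{d,m}(1+|z|^2)^{-m/2}\le C_{d,m}$. This can be checked by differentiating directly: each derivative gains a factor homogeneous of degree $-1$ in $\sqrt{1+|z|^2}$. Therefore $|D^m_pw|\le C_{d,m}\eps^{-m}$, and the Leibniz rule gives
\[
|D_p^m(w_iw_j)|\le C_{d,m}\eps^{-m}\quad\text{for } m=1,2.
\]
Multiplying by $G\le g^*$ yields the stated bound depending on $d,m,\eps,g^*$. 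The $\nupar\delta_{ij}$ part has no $p$-dependence.

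For the $x$-dependence, $a^{ij}$ depends on $x$ only through the factor $G(x)$, and $|P_{ij}|\le 2$. So $|D_x^ka^{ij}|\le 2|D^k_xG|\le 2\norm{G}_{C^2}$ for $k\le2$. Mixed derivatives $D_x^kD_p^m a^{ij}=D_x^kG\,D^m_pP_{ij}$ are bounded in the same way by products of the previous bounds. Hölder moduli in $x$, uniform in $p$, follow from $G\in C^{3+\alpha}$, since $P$ is bounded independently of $x$. The lower-order term $\nabla G\cdot\nabla v$ in \eqref{eq:nondivergence} has coefficient $\nabla G\in C^{2+\alpha}$.

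None of these bounds involves $|p|$. The only step needing care is the uniform derivative estimate for $W$, and the scaling $z=p/\eps$ reduces it to bounded derivatives of a fixed smooth function with decaying derivatives. I expect no genuine obstacle here.
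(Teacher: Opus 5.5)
Your proposal is correct and follows essentially the paper's route. Your Cauchy--Schwarz bound on $P(p)\xi\cdot\xi$ is the quadratic-form version of the paper's eigenvalue computation for $I-p\otimes p/(\eps^2+|p|^2)$, and your factorization $p_ip_j/(\eps^2+|p|^2)=w_iw_j$ with the rescaling $z=p/\eps$ is a tidier substitute for the paper's explicit differentiation and degree count, with the small bonus of making the bound $|D_p^m a^{ij}|\le C_{d,m}\,g^*\eps^{-m}$ explicit.
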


\begin{proof}
The matrix $P_\eps(p)=I-p\otimes p/(\eps^2+|p|^2)$ has eigenvalue $1$ on the subspace orthogonal to $p$ and eigenvalue $\eps^2/(\eps^2+|p|^2)$ in the direction of $p$.  Hence $0<P_\eps(p)\le I$.  Adding $\nupar I$ and using $0<G\le g^*$ gives \eqref{eq:ellipticity}.  Since $\eps>0$, the map $p\mapsto p_i p_j/(\eps^2+|p|^2)$ is smooth.  More explicitly,
\[
        \partial_{p_k}\left(\frac{p_i p_j}{\eps^2+|p|^2}\right)
        =\frac{\delta_{ik}p_j+\delta_{jk}p_i}{\eps^2+|p|^2}
        -\frac{2p_i p_j p_k}{(\eps^2+|p|^2)^2}.
\]
Each term on the right is globally bounded on $\R^d$, with constants depending only on $d$ and negative powers of $\eps$.  Differentiating once more gives rational expressions whose numerators have degree at most four and whose denominators contain powers of $\eps^2+|p|^2$ large enough to give the same global boundedness.  Hence the first and second $p$-derivatives are bounded uniformly in $p$.  Multiplication by the smooth coefficient function $G$ gives the asserted coefficient bounds.
\end{proof}

\begin{lemma}[Bounds, monotonicity, and composition estimates for the reaction term]\label{lem:reaction}
There are constants $M_\eta,L_\eta,C_\eta^{\mathrm{rxn}}<\infty$, depending only on $\norm{\Lambda}_{C^{1+\alpha}}$, $\eta$, and finitely many derivatives of $H$, such that for all $x\in\overline\Omega$ and $r,s\in\R$,
\begin{align}
        0\le \cT_\eta(x,r)&\le M_\eta,\label{eq:T-bound}\\
        |\cT_\eta(x,r)-\cT_\eta(x,s)|&\le L_\eta |r-s|,\label{eq:T-lip}\\
        \partial_r\cT_\eta(x,r)&=\Lambda(x)h_\eta(r-q)\ge0,\label{eq:T-monotone}\\
        \norm{\cT_\eta(\cdot,r)}_{C^{1+\alpha}(\overline\Omega)}&\le C_\eta^{\mathrm{rxn}}
        \quad \text{for } r\in[0,1].\label{eq:T-xbound}
\end{align}
If $0<\beta\le\alpha$ and $v\in C^\beta(\overline\Omega)$ satisfies $0\le v\le1$, then
\begin{equation}\label{eq:T-composition}
        \norm{\cT_\eta(\cdot,v(\cdot))}_{C^\beta(\overline\Omega)}
        \le C_\eta\bigl(1+\norm{v}_{C^\beta(\overline\Omega)}\bigr).
\end{equation}
Moreover,
\begin{equation}\label{eq:T-barrier}
        \cT_\eta(x,r)=0\quad\text{whenever } r\le0,
        \qquad
        \cT_\eta(x,r)\ge0\quad\text{for all } r\in\R .
\end{equation}
\end{lemma}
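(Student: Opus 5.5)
The plan is to read off each assertion directly from the definition $\cT_\eta(x,r)=\Lambda(x)H_\eta(r-q)$, with $H_\eta(s)=H(s/\eta)$ and $h_\eta=H_\eta'$. No PDE input is needed; everything reduces to elementary properties of $H$ together with the product and chain rules in Hölder spaces.

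\emph{Pointwise bounds and monotonicity.} Since $0\le H\le1$ and $\Lambda\ge0$, we get $0\le\cT_\eta\le\norm{\Lambda}_{C^0}=:M_\eta$. This gives \eqref{eq:T-bound} and the second half of \eqref{eq:T-barrier}. Differentiating in $r$ yields $\partial_r\cT_\eta=\Lambda(x)h_\eta(r-q)$, which is nonnegative because $H'\ge0$; this is \eqref{eq:T-monotone}. Since $h_\eta\le\eta^{-1}\sup|H'|$, the mean value theorem gives \eqref{eq:T-lip} with $L_\eta=\eta^{-1}\norm{H'}_\infty\norm{\Lambda}_{C^0}$. For the barrier, take $r\le0$. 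Then $r-q\le-q<-\eta$, because $\eta<q$. Hence $(r-q)/\eta<-1$, so $H_\eta(r-q)=0$ and $\cT_\eta(x,r)=0$.

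\emph{Estimate \eqref{eq:T-xbound}.} For fixed $r$, the map $x\mapsto\cT_\eta(x,r)$ is the constant $H_\eta(r-q)\in[0,1]$ times $\Lambda$. Therefore its $C^{1+\alpha}$ norm is at most $\norm{\Lambda}_{C^{1+\alpha}}$, uniformly in $r$. The restriction $r\in[0,1]$ is not even needed here.

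\emph{Composition estimate \eqref{eq:T-composition}.} Write $\cT_\eta(\cdot,v)=\Lambda\cdot(H_\eta(v-q))$ and use the algebra property $\norm{fg}_{C^\beta}\le\norm{f}_{C^\beta}\norm{g}_{C^0}+\norm{f}_{C^0}\norm{g}_{C^\beta}$.

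The first factor satisfies $\norm{\Lambda}_{C^\beta}\le C(\Omega)\norm{\Lambda}_{C^{1+\alpha}}$. This holds because $\Lambda$ is Lipschitz on $\overline\Omega$. If $\Omega$ is not convex, one argues via smoothness of $\partial\Omega$, which makes $\overline\Omega$ quasiconvex; alternatively, for $|x-y|\ge1$ one bounds the difference quotient by $2\norm{\Lambda}_{C^0}$.

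For the second factor, $H_\eta$ is globally Lipschitz with constant $\eta^{-1}\norm{H'}_\infty$. Hence $[H_\eta(v-q)]_\beta\le\eta^{-1}\norm{H'}_\infty[v]_\beta$, and its sup norm is at most $1$.

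Combining the two factors gives $\norm{\cT_\eta(\cdot,v)}_{C^\beta}\le C\norm{\Lambda}_{C^{1+\alpha}}\bigl(1+\eta^{-1}\norm{H'}_\infty\norm{v}_{C^\beta}\bigr)$, which is the claim. The hypothesis $0\le v\le1$ is used only to keep the constants uniform.

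\emph{Expected obstacle.} There is no real obstacle. The only point needing care is the embedding $C^{1+\alpha}(\overline\Omega)\hookrightarrow C^\beta(\overline\Omega)$ on a possibly nonconvex domain, and the smooth boundary handles this.
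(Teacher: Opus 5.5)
Your proposal is correct and follows the paper's proof essentially step for step. It uses the same constants $M_\eta=\norm{\Lambda}_{L^\infty}$ and $L_\eta=\norm{\Lambda}_{L^\infty}\norm{h_\eta}_{L^\infty}$, the same support argument $r-q\le -q<-\eta$ for the barrier, and the same product-plus-Lipschitz-composition bound for the $C^\beta$ estimate. Your handling of $[\Lambda]_{C^\beta}$ through quasiconvexity of $\overline\Omega$ is actually more explicit than the paper, which just writes $[\Lambda]_{C^\beta}$ without relating it to $\norm{\Lambda}_{C^{1+\alpha}}$. Keep that path argument: the ``$|x-y|\ge1$'' alternative on its own does not handle nearby points in a nonconvex domain.
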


\begin{proof}
The estimates follow from \eqref{eq:topology-reaction}, the nonnegativity of $\Lambda$ and $H_\eta$, and the boundedness of the derivatives of $H_\eta$.  One may take
\[
        M_\eta=\norm{\Lambda}_{L^\infty},
        \qquad
        L_\eta=\norm{\Lambda}_{L^\infty}\norm{h_\eta}_{L^\infty}.
\]
Since $0<\eta<q$, if $r\le0$ then $r-q\le -q<-\eta$, and the support convention for $H_\eta$ gives $H_\eta(r-q)=0$.  Monotonicity follows from
$\partial_r\cT_\eta(x,r)=\Lambda(x)h_\eta(r-q)\ge0$.
The composition estimate follows from the product estimate in $C^\beta$ and the Lipschitz bound for $H_\eta$ on $[0,1]$:
\[
 [\Lambda H_\eta(v-q)]_{C^\beta}
 \le \norm{\Lambda}_{L^\infty}\norm{h_\eta}_{L^\infty}[v]_{C^\beta}
 +[\Lambda]_{C^\beta}.
\]
\end{proof}

\begin{lemma}[Comparison principle on a positive-time interval]\label{lem:comparison}
Let $0\le t_0<T$ and
\[
 w\in C([t_0,T];C^1(\overline\Omega))\cap C^{1,2}((t_0,T]\times\overline\Omega)
\]
satisfy
\[
 w_t-a^{ij}(t,x)w_{ij}-c^i(t,x)w_i+d(t,x)w\le0
\]
in $(t_0,T]\times\Omega$.  Assume that $a^{ij},c^i,d$ are continuous and bounded on $[t_0,T]\times\overline\Omega$, that $a^{ij}$ is uniformly parabolic, and that $d\ge0$.  If $w\le0$ at $t=t_0$ and on the spatial boundary, then $w\le0$ in $[t_0,T]\times\overline\Omega$.
\end{lemma}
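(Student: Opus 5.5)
We argue by contradiction using the standard exponential-shift trick, so that the proof reduces to a first-touch analysis at a positive maximum. Fix $\lambda>0$ and set $z(t,x)=e^{-\lambda t}w(t,x)$. Then
\[
 z_t-a^{ij}z_{ij}-c^iz_i+(d+\lambda)z = e^{-\lambda t}\bigl(w_t-a^{ij}w_{ij}-c^iw_i+dw\bigr)\le0 ,
\]
and $z\le0$ at $t=t_0$ and on $[t_0,T]\times\partial\Omega$. It suffices to show $z\le0$, since $w=e^{\lambda t}z$.

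Suppose, to the contrary, that $\max_{[t_0,T]\times\overline\Omega}z>0$. This maximum exists because $z$ is continuous on the compact set $[t_0,T]\times\overline\Omega$; continuity follows from $w\in C([t_0,T];C^1(\overline\Omega))$. Let $(t_1,x_1)$ be a point where the maximum is attained. By the hypotheses on the initial slice and the lateral boundary, $t_1>t_0$ and $x_1\in\Omega$. In particular $(t_1,x_1)\in(t_0,T]\times\Omega$, where $w$ is $C^{1,2}$.

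We now read off the local conditions at this maximum.
\begin{itemize}
\item Since $x_1$ is an interior spatial maximum of $z(t_1,\cdot)$, we have $\nabla z(t_1,x_1)=0$ and $D^2z(t_1,x_1)\le0$.
\item Because $t\mapsto z(t,x_1)$ attains its maximum over $[t_0,t_1]$ at $t_1$, the one-sided (left) derivative gives $z_t(t_1,x_1)\ge0$. This holds even when $t_1=T$.
\item Uniform parabolicity makes $(a^{ij})$ positive definite, so $a^{ij}z_{ij}=\operatorname{tr}(A\,D^2z)\le0$ at $(t_1,x_1)$.
\end{itemize}

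Evaluating the differential inequality at $(t_1,x_1)$ therefore gives
\[
0\ge z_t-a^{ij}z_{ij}-c^iz_i+(d+\lambda)z\ge (d+\lambda)z(t_1,x_1)\ge\lambda z(t_1,x_1)>0,
\]
using $d\ge0$ and $z(t_1,x_1)>0$. This is a contradiction, so $z\le0$ and hence $w\le0$ on $[t_0,T]\times\overline\Omega$.

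The only delicate point is regularity. We use $C^{1,2}$ only at interior points with $t>t_0$, which is exactly where the hypothesis provides it. We use continuity up to $t=t_0$ only to guarantee that the maximum exists and is positive away from the initial slice. Boundedness of $c^i$ and $d$ is not really needed beyond the inequality being meaningful pointwise; the shift $\lambda>0$ replaces any strictness requirement in the inequality.
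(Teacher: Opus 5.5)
Your proof is correct and is essentially the paper's argument: you derive a contradiction at a positive interior maximum from $\nabla z=0$, $D^2z\le0$, the one-sided time derivative being nonnegative, and $d\ge0$. The only difference is the perturbation that makes the inequality strict. You use $z=e^{-\lambda t}w$, so the zeroth-order coefficient becomes $d+\lambda>0$ and no limit is needed; the paper instead subtracts $\delta(t-t_0)$ to produce a $-\delta$ in the inequality and then lets $\delta\downarrow0$.
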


\begin{proof}
For $\delta>0$ set $w_\delta=w-\delta(t-t_0)$.  If $w_\delta$ had a positive maximum at an interior point with positive time, then $w_\delta>0$, $\nabla w_\delta=0$, $D^2w_\delta\le0$, and $(w_\delta)_t\ge0$ there.  Since $d\ge0$, the differential inequality gives $0\le (w_\delta)_t\le -\delta-dw_\delta\le-\delta$, a contradiction.  Letting $\delta\downarrow0$ proves the claim.
\end{proof}

\begin{proposition}[Invariant physical range]\label{prop:range}
Every positive-time classical solution of \eqref{eq:main-pde} with $0\le u^0\le1$ and $u(t,x)=0$ for $t>0$ and $x\in\partial\Omega$ satisfies $0\le u\le1$ on its interval of existence.
\end{proposition}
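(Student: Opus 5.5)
We compare $u$ with the constant barriers $0$ and $1$ by means of Lemma~\ref{lem:comparison}. The nonlinear difference is linearized along the segment between the solution and the constant, so that the frozen coefficients are continuous and bounded and the zero-order coefficient has the right sign. Fix $T$ in the existence interval, and write the equation in the nondivergence form \eqref{eq:nondivergence}:
\[
u_t=a^{ij}(x,\nabla u)u_{ij}+\nabla G\cdot\nabla u-\mu\cT_\eta(x,u).
\]
Set $a^{ij}(t,x):=a^{ij}(x,\nabla u(t,x))$. This coefficient is continuous on $(0,T]\times\overline\Omega$ because $u\in C^{1,2}$. It is also continuous up to $t=0$ because $u\in C([0,T];C^1(\overline\Omega))$ and $a^{ij}$ is smooth in $p$. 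It is bounded and uniformly parabolic by Lemma~\ref{lem:ellipticity}, and the bounds do not depend on $|\nabla u|$. Set $c^i:=G_i(x)$.

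\emph{Lower bound.} Let $w=-u$. Since $\cT_\eta(x,0)=0$ by \eqref{eq:T-barrier}, we can write
\[
\cT_\eta(x,u)=\cT_\eta(x,u)-\cT_\eta(x,0)=d_0(t,x)\,u,\qquad d_0(t,x)=\int_0^1\partial_r\cT_\eta(x,\theta u)\dd\theta .
\]
By \eqref{eq:T-monotone}, $d_0\ge0$. The coefficient $d_0$ is continuous, being an integral of the smooth function $\Lambda h_\eta$ composed with a continuous $u$, and it is bounded by $L_\eta$. Hence
\[
w_t-a^{ij}w_{ij}-c^iw_i+\mu d_0 w=0 .
\]
At $t=0$ we have $w=-u^0\le0$, and $w=0$ on $\partial\Omega$. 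Lemma~\ref{lem:comparison} with $t_0=0$ and $d=\mu d_0\ge0$ then gives $u\ge0$.

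\emph{Upper bound.} Let $w=u-1$. Constants are annihilated by $\cK_{\eps,\nupar}$. With $d_1=\int_0^1\partial_r\cT_\eta(x,1+\theta w)\dd\theta\ge0$, we get
\[
w_t-a^{ij}w_{ij}-c^iw_i+\mu d_1 w=-\mu\cT_\eta(x,1)\le0,
\]
using $\cT_\eta\ge0$ from \eqref{eq:T-bound}. At $t=0$ we have $w=u^0-1\le0$, and on $\partial\Omega$ we have $w=-1\le0$. Lemma~\ref{lem:comparison} gives $u\le1$. Since $T$ was arbitrary in the existence interval, the range holds on the whole interval.

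The main obstacle is checking the hypotheses of Lemma~\ref{lem:comparison} at $t_0=0$. The coefficients must be continuous and bounded on the \emph{closed} interval $[0,T]\times\overline\Omega$, whereas $u$ is only $C^1$ in space at $t=0$. Continuity of $a^{ij}(x,\nabla u)$ and $d_0,d_1$ up to $t=0$ follows from $u\in C([0,T];C^1)$, and boundedness follows from Lemmas~\ref{lem:ellipticity} and~\ref{lem:reaction}. If one prefers to avoid the endpoint altogether, apply the lemma on $[s,T]$ with the barrier $w-\sup_\Omega w(s)^+$. Here $\sup_\Omega w(s)^+\to0$ as $s\downarrow0$ by uniform continuity in time. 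The shifted function satisfies the same linear inequality up to the term $-d\,\sup_\Omega w(s)^+\le0$, which has a favorable sign because $d\ge0$. Letting $s\downarrow0$ then gives the same conclusion.
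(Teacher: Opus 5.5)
Your proof is correct, but it takes a different route from the paper.

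\textbf{The paper's route.} It never linearizes. It works with the nonlinear equation through the strict barriers $u+\delta e^{Kt}$ and $u-1-\delta e^{Kt}$. At a first interior contact point one has $\nabla u=0$ and $D^2u$ has a sign, so $\cK_{\eps,\nupar}[u]$ has a sign. The reaction there is either zero (because $\cT_\eta(x,r)=0$ for $r\le0$) or nonnegative. The factor $K\delta e^{Kt}$ supplies the strict inequality that produces the contradiction.

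\textbf{Your route.} You write $u$ and $u-1$ as solutions of linear inequalities with frozen coefficients and invoke Lemma~\ref{lem:comparison} directly. You correctly identified why this is legitimate at $t_0=0$: linearizing about a \emph{constant} produces no term involving the Hessian of the comparison function. So, unlike the drift $\bar c^k$ in Lemma~\ref{lem:stability-local}, your coefficients $a^{ij}(x,\nabla u)$, $\nabla G$, $\mu d_0$ and $\mu d_1$ are continuous and bounded up to $t=0$ using only $u\in C([0,T];C^1(\overline\Omega))$. The paper, by contrast, has to start the stability comparison at $\tau_\delta>0$.

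\textbf{What each buys.} Your argument is modular. It shows the range estimate is a one-sided instance of the same comparison machinery used for stability, since $0$ is an exact solution and $1$ is a supersolution. However, it uses the monotonicity \eqref{eq:T-monotone} to get a nonnegative zero-order coefficient. The paper's pointwise argument needs only the sign information \eqref{eq:T-barrier}, so it would survive a non-monotone reaction with those sign properties. You could recover that generality too: $d_0$ and $d_1$ are bounded, so replacing $w$ by $e^{-\lambda t}w$ with $\lambda$ large restores a nonnegative zero-order term. One small wording point: $\Lambda$ is only $C^{1+\alpha}$, not smooth, but continuity is all your argument uses.
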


\begin{proof}
We give the argument with strict barriers to avoid ambiguity at the first contact level.  Fix $\delta>0$ and $K>0$.  Since $u^0\ge0$ and the boundary condition is $u(t,x)=0$ for $t>0$ and $x\in\partial\Omega$, the function
\[
        u_\delta^-(t,x)=u(t,x)+\delta e^{Kt}
\]
is positive at the initial time and on the spatial boundary.  If $u_\delta^-$ had a first zero at an interior point $(t_0,x_0)$ with $t_0>0$, then $u(t_0,x_0)=-\delta e^{Kt_0}<0$, $\nabla u(t_0,x_0)=0$, and $D^2u(t_0,x_0)\ge0$.  By \eqref{eq:T-barrier}, $\cT_\eta(x_0,u(t_0,x_0))=0$, while uniform ellipticity gives $\cK_{\eps,\nupar}[u](t_0,x_0)\ge0$.  Hence
\[
        (u_\delta^-)_t(t_0,x_0)=u_t(t_0,x_0)+K\delta e^{Kt_0}
        \ge K\delta e^{Kt_0}>0,
\]
contradicting the first-contact condition for a nonnegative function touching zero from above.  Thus $u\ge-\delta e^{Kt}$, and letting $\delta\downarrow0$ gives $u\ge0$.

For the upper bound, the function
\[
        u_\delta^+(t,x)=u(t,x)-1-\delta e^{Kt}
\]
is negative at the initial time and on the spatial boundary.  If it had a first zero at an interior point, then $u(t_0,x_0)=1+\delta e^{Kt_0}>1$, $\nabla u(t_0,x_0)=0$, and $D^2u(t_0,x_0)\le0$.  Therefore $\cK_{\eps,\nupar}[u](t_0,x_0)\le0$, while $-\mu\cT_\eta(x_0,u(t_0,x_0))\le0$.  Hence
\[
        (u_\delta^+)_t(t_0,x_0)=u_t(t_0,x_0)-K\delta e^{Kt_0}<0,
\]
which contradicts the first-contact condition for a nonpositive function touching zero from below.  Thus $u\le1+\delta e^{Kt}$, and letting $\delta\downarrow0$ gives $u\le1$.
\end{proof}

\section{Local solvability and stability}
\label{sec:local}

\begin{proposition}[Compatible local quasilinear Dirichlet theorem]
\label{prop:external-local}
Let $\Omega$ have $C^{2+\sigma}$ boundary, $0<\sigma<1$.  Consider
\[
 \begin{cases}
 w_t-A^{ij}(x,t,w,Dw)w_{ij}=f(x,t,w,Dw)
     &\text{in }(0,S)\times\Omega,\\
 w=0&\text{on }(0,S)\times\partial\Omega,\\
 w(0,\cdot)=w^0&\text{in }\Omega.
 \end{cases}
\]
Suppose that, on every bounded $(z,p)$-set, $A^{ij}$ is symmetric and
uniformly parabolic; the maps $A^{ij}$, $\partial_zA^{ij}$, $\partial_{p_k}A^{ij}$, $f$,
$\partial_zf$, and $\partial_{p_k}f$ are jointly H\"older continuous in
$(t,x,z,p)$ on every bounded $(z,p)$-set; and $A^{ij}$ and $f$ are
locally Lipschitz in $(z,p)$, uniformly in $(t,x)$.  Assume
$w^0\in C^{2+\sigma}(\overline\Omega)$ and
\[
 w^0=0,\qquad
 A^{ij}(x,0,w^0,Dw^0)w^0_{ij}+f(x,0,w^0,Dw^0)=0
 \quad\text{on }\partial\Omega.
\]
Then, for some $S>0$ and $\beta\in(0,\sigma]$, the problem has a unique
solution
\[
 w\in C^{1+\beta/2,2+\beta}([0,S]\times\overline\Omega).
\]

\vspace{.1in}
This proposition is the standard consequence of linear Dirichlet Schauder
solvability and the quasilinear contraction/fixed-point construction in a
bounded $C^{1+\beta/2,2+\beta}$ neighborhood of the initial datum; see
\cite[Chapter~V, Sections~6--7]{LSU1968},
\cite[Chapter~6]{Friedman1964}, and
\cite[Chapters~XIII--XIV]{Lieberman1996}.
\end{proposition}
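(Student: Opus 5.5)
The plan is the classical linearize-and-fix-point argument. I first reduce to zero initial data, then solve a frozen-coefficient linear problem and apply Banach's fixed point theorem in a small closed ball of the parabolic H\"older space.

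\emph{Reduction to zero initial data.} Let $W$ be the solution of the linear heat-type problem
\[
W_t-A^{ij}(x,0,w^0,Dw^0)W_{ij}=f(x,0,w^0,Dw^0),\qquad W=0 \text{ on } \partial\Omega,\qquad W(0)=w^0 .
\]
The zeroth-order compatibility ($w^0=0$ on $\partial\Omega$) and first-order compatibility are exactly the hypotheses. Linear Dirichlet Schauder theory \cite[Ch.~IV]{LSU1968} then gives $W\in C^{1+\sigma/2,2+\sigma}([0,1]\times\overline\Omega)$. We seek $w=W+v$ with $v(0)=0$, so $v$ and $v_t$ vanish at $t=0$ and on the lateral boundary to the needed order.

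\emph{Fixed-point map.} Pick $\beta\in(0,\sigma)$, $R>0$, and let $X_S$ be the set of $v\in C^{1+\beta/2,2+\beta}([0,S]\times\overline\Omega)$ with $v(0)=0$, $v=0$ on the lateral boundary, and $\norm{v}\le R$. For $v\in X_S$, set $w=W+v$ and define $\Phi(v)=\tilde v$ as the solution of
\[
\tilde v_t-A^{ij}(x,t,w,Dw)\tilde v_{ij}=f(x,t,w,Dw)-f(x,0,w^0,Dw^0)+\bigl(A^{ij}(x,t,w,Dw)-A^{ij}(x,0,w^0,Dw^0)\bigr)W_{ij}
\]
with zero initial and boundary data. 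At $t=0$ the right-hand side vanishes because $w(0)=w^0$, so the compatibility for $\tilde v$ holds automatically. Since $\norm{w}_{C^{1+\beta/2,2+\beta}}$ is bounded, $(w,Dw)$ stays in a bounded set. The H\"older hypotheses on $A,f$ then make the frozen coefficients lie in $C^{\beta/2,\beta}$ and be uniformly parabolic. The linear Schauder estimate with zero data gives $\norm{\tilde v}\le C\norm{\mathrm{RHS}}_{C^{\beta/2,\beta}}$.

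\emph{Smallness and contraction (the main obstacle).} The key point is that $\norm{\mathrm{RHS}}_{C^{\beta/2,\beta}}$, and the Lipschitz constant of $\Phi$, can be made small by shrinking $S$. This uses interpolation: for $v$ vanishing at $t=0$ with a $C^{1+\beta/2,2+\beta}$ bound, the gap between exponents $\beta<\sigma$, or an appeal to the $C^{1+\beta/2,2+\beta}$ versus $C^{(1+\beta')/2,1+\beta'}$ embedding, gives $\norm{v}_{C^{(1+\beta')/2,1+\beta'}}\le CS^{\gamma}\norm{v}_{C^{1+\beta/2,2+\beta}}$ for some $\gamma>0$. Only $w$ and $Dw$ enter the coefficients, so the differences $A(\cdot,w,Dw)-A(\cdot,0,w^0,Dw^0)$ are small in $C^{\beta/2,\beta}$ on $[0,S]$. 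The differences of $A$ and $f$ at two points $v_1,v_2$ are controlled by the local Lipschitz bounds and the $\partial_z,\partial_p$ H\"older bounds times $\norm{v_1-v_2}$ in the lower norm. The extra term $(A(w_1)-A(w_2))\tilde v_{2,ij}$ is handled the same way. I must check that the Schauder constant stays uniform as $S\downarrow0$; extending by zero for $t<0$ handles this for zero initial data. Hence $\Phi$ maps $X_S$ into itself and is a contraction for $S$ small, and Banach's theorem yields $w$.

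\emph{Uniqueness.} If $w_1,w_2$ are two solutions in the class, their difference satisfies a linear equation $z_t-A^{ij}z_{ij}-b^iz_i-cz=0$ with bounded coefficients obtained from the mean value theorem in $(z,p)$. It has zero initial and boundary data, so $z=0$ by the maximum principle (Lemma~\ref{lem:comparison} after the substitution $z e^{-\lambda t}$ to make the zeroth-order coefficient nonnegative). Alternatively, uniqueness follows directly from the contraction on a common short interval, continued stepwise.
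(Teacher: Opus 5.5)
Your proposal is correct and follows the same route as the paper, which gives no separate argument and simply invokes this standard construction (linear Dirichlet Schauder solvability plus a contraction in a small $C^{1+\beta/2,2+\beta}$ ball around the initial datum); your lift by $W$, the smallness from data vanishing at $t=0$, and maximum-principle uniqueness are the usual way to carry it out. One detail to make explicit: since the statement leaves the H\"older exponent of $A^{ij}$, $f$ and their $z,p$-derivatives unspecified, $W$ is only as regular as the smaller of $\sigma$ and that exponent allows, and $\beta$ must be chosen strictly below both, because this exponent gap is what makes the right-hand side small as $S\downarrow0$.
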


\begin{lemma}[Short-time compatible Schauder solvability]
\label{lem:local}
Let $\gamma\in(0,\alpha]$. Assume {\rm (A1)}, {\rm (A2)}, and {\rm (A4)}, and let
$w^0\in C^{2+\gamma}(\overline{\Omega})$ satisfy
\[
w^0=0,
\qquad
\mathcal{K}_{\varepsilon,\nu}[w^0]
-\mu\cT_\eta(x,w^0)=0
\quad\text{on }\partial\Omega.
\]
Then there exist $S_\gamma>0$ and $\beta\in(0,\gamma]$ such that
\eqref{eq:main-pde}, with initial value $w^0$, has a unique solution
\[
u\in
C^{1+\beta/2,\,2+\beta}
\bigl([0,S_\gamma]\times\overline{\Omega}\bigr).
\]
In particular,
\[
u\in
C\bigl([0,S_\gamma];C^1(\overline{\Omega})\bigr)
\cap
C^{1,2}\bigl((0,S_\gamma]\times\overline{\Omega}\bigr).
\]
\end{lemma}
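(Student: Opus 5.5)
The plan is to reduce Lemma~\ref{lem:local} to Proposition~\ref{prop:external-local} with $\sigma=\gamma$. Using the nondivergence form \eqref{eq:nondivergence}, write \eqref{eq:main-pde} as
\[
 u_t-A^{ij}(x,t,u,Du)u_{ij}=f(x,t,u,Du),
\]
with
\[
 A^{ij}(x,t,z,p)=a^{ij}(x,p),\qquad f(x,t,z,p)=\nabla G(x)\cdot p-\mu\Lambda(x)H_\eta(z-q).
\]
Since $\Omega$ has $C^{4+\alpha}$ boundary, it has $C^{2+\gamma}$ boundary. The initial datum $w^0$ lies in $C^{2+\gamma}(\overline\Omega)$. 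The two boundary conditions required by Proposition~\ref{prop:external-local}, namely $w^0=0$ and $A^{ij}w^0_{ij}+f=0$ on $\partial\Omega$, are exactly the hypotheses $w^0=0$ and $\cK_{\eps,\nupar}[w^0]-\mu\cT_\eta(x,w^0)=0$ of the lemma, again by \eqref{eq:nondivergence}.

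Next I verify the structural hypotheses of Proposition~\ref{prop:external-local} on an arbitrary bounded set $\{|z|+|p|\le R\}$.

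\emph{Symmetry and parabolicity.} $a^{ij}$ is symmetric by \eqref{eq:aij}. By Lemma~\ref{lem:ellipticity} it is uniformly parabolic with constants $\nupar$ and $\nupar+g^*$, independent of $R$.

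\emph{Hölder continuity.} The coefficients $A^{ij}$, $\partial_{p_k}A^{ij}$ and $\partial_zA^{ij}=0$ are smooth in $p$ and $C^{3+\alpha}$ in $x$ through $G$, so they are jointly Hölder continuous on bounded sets. For $f$, we have $\partial_{p_k}f=\partial_kG\in C^{2+\alpha}$ and $\partial_zf=-\mu\Lambda h_\eta(z-q)$. Since $\Lambda\in C^{1+\alpha}$ and $H_\eta$ is smooth, both $f$ and $\partial_zf$ are Lipschitz in $x$ and smooth in $z$. Hence they are jointly Hölder continuous with exponent $\gamma\le\alpha$ on bounded sets. There is no $t$-dependence, so temporal Hölder continuity is trivial.

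\emph{Local Lipschitz continuity.} This follows from the bounded $p$-derivatives in Lemma~\ref{lem:ellipticity} and from \eqref{eq:T-lip}.

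Proposition~\ref{prop:external-local} then yields $S_\gamma>0$, $\beta\in(0,\gamma]$ and a unique solution $u\in C^{1+\beta/2,2+\beta}([0,S_\gamma]\times\overline\Omega)$.

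The ``in particular'' statement follows from Remark~\ref{rem:holder-convention}. In $C^{1+\beta/2,2+\beta}$ the functions $u$ and $D_xu$ are continuous in $(t,x)$ on the compact cylinder, so they are uniformly continuous. This gives $u\in C([0,S_\gamma];C^1(\overline\Omega))$. Continuity of $u_t$ and $D_x^2u$ gives $C^{1,2}$ regularity, even up to $t=0$.

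The uniqueness asserted in Proposition~\ref{prop:external-local} is within the Schauder class. If uniqueness among the classical solutions of Definition~\ref{def:classical} is also wanted, I would subtract two solutions $u,v$ and write $w=u-v$, which satisfies a linear equation. Its coefficients are
\[
\tilde a^{ij}=a^{ij}(x,\nabla u),\qquad c^i=\nabla G+\int_0^1\partial_{p}a^{ij}(x,\nabla v+s\nabla w)\,ds\;v_{ij},
\]
with the $\partial_p a^{ij}$ term acting on $w_i$, and
\[
d=\mu\int_0^1\partial_r\cT_\eta\,ds\ge0
\]
by \eqref{eq:T-monotone}. Applying Lemma~\ref{lem:comparison} to $\pm w$ gives uniqueness.

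The main obstacle I expect is bookkeeping rather than analysis, namely the joint Hölder continuity of $f$ and $\partial_zf$ in $x$ when $\Lambda$ is only $C^{1+\alpha}$. This is handled by the composition estimate \eqref{eq:T-composition} and the fact that $\gamma\le\alpha$. A smaller point is that the comparison-based uniqueness needs $v_{ij}$ bounded up to $t=0$, which holds for Schauder-class solutions. For merely positive-time classical solutions, I would instead apply the argument on $[\tau,S]$ and let $\tau\downarrow0$, using $w\in C([0,S];C^1)$.
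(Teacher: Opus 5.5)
Your proposal is correct and follows the paper's own proof: you make the same reduction to Proposition~\ref{prop:external-local} with $A^{ij}=a^{ij}(x,p)$ and $f=\nabla G\cdot p-\mu\cT_\eta(x,z)$, check the same symmetry, parabolicity, H\"older and local Lipschitz hypotheses, and identify the corner compatibility condition through the nondivergence form. Your comparison-based uniqueness argument is essentially the paper's Lemma~\ref{lem:stability-local}, which the paper also cites as an alternative source of uniqueness.
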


\begin{proof}
We apply Proposition~\ref{prop:external-local}. In the notation of that
proposition, set
\[
A^{ij}(x,t,z,p)=a^{ij}(x,p),
\qquad
f(x,t,z,p)=B(x,z,p),
\qquad
\varphi=0.
\]

Write the equation as
\[
u_t-a^{ij}(x,\nabla u)u_{ij}=B(x,u,\nabla u),
\]
where
\[
B(x,r,p)
=
\nabla G(x)\cdot p
-
\mu\cT_\eta(x,r).
\]

By Lemma~\ref{lem:ellipticity}, the matrix $a^{ij}(x,p)$ is symmetric and
satisfies
\[
\nu |\xi|^2
\leq
a^{ij}(x,p)\xi_i\xi_j
\leq
(\nu+g^*)|\xi|^2
\]
for every
\[
x\in\overline{\Omega},
\qquad
p,\xi\in\mathbb{R}^d.
\]
Moreover, $a^{ij}$ is $C^{1+\alpha}$ in $x$, smooth in $p$, and its first two
derivatives with respect to $p$ are bounded on every bounded $p$-set.

The function $B$ is $C^\alpha$ in $x$ and locally Lipschitz continuous in
$(r,p)$, uniformly on bounded sets, because
\[
G\in C^{3+\alpha}(\overline{\Omega}),
\qquad
\Lambda\in C^{1+\alpha}(\overline{\Omega}),
\]
and $H_\eta$ is smooth. The boundary is of class $C^{4+\alpha}$, and the
lateral boundary datum is zero. The identity
\[
\mathcal{K}_{\varepsilon,\nu}[w^0]
-\mu\cT_\eta(x,w^0)=0
\quad\text{on }\partial\Omega
\]
is precisely the first compatibility condition at the corner
$\{0\}\times\partial\Omega$.

Proposition~\ref{prop:external-local} therefore yields
$S_\gamma>0$, $\beta\in(0,\gamma]$, and a solution
\[
u\in
C^{1+\beta/2,\,2+\beta}
\bigl([0,S_\gamma]\times\overline{\Omega}\bigr).
\]
Uniqueness in this class follows from the quasilinear uniqueness statement
in Proposition~\ref{prop:external-local}; it also follows from
Lemma~\ref{lem:stability-local}.

Thus, the result provides local classical solvability for compatible initial
data. The global solution is subsequently obtained from the a priori
estimates and continuation argument developed below.
\end{proof}

\begin{lemma}[Supremum-norm stability on a common interval]\label{lem:stability-local}
Let $u,v\in C([0,T];C^1(\overline\Omega))\cap C^{1,2}((0,T]\times\overline\Omega)$ be two positive-time classical solutions of \eqref{eq:main-pde} on $[0,T]$ with the same coefficient functions $G$ and $\Lambda$.  Then
\[
        \norm{u(t)-v(t)}_{L^\infty(\Omega)}
        \le \norm{u(0)-v(0)}_{L^\infty(\Omega)},
        \qquad 0\le t\le T.
\]
\end{lemma}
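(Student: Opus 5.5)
The plan is to linearize the difference of the two equations and then apply the comparison principle, Lemma~\ref{lem:comparison}, to the difference shifted by the constant $k=\norm{u(0)-v(0)}_{L^\infty(\Omega)}$.

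Set $w=u-v$. Using the nondivergence form \eqref{eq:nondivergence}, the difference of the two equations can be written as
\[
w_t = a^{ij}(x,\nabla u)w_{ij} + \bigl(a^{ij}(x,\nabla u)-a^{ij}(x,\nabla v)\bigr)v_{ij} + \nabla G\cdot\nabla w - \mu\bigl(\cT_\eta(x,u)-\cT_\eta(x,v)\bigr).
\]
\begin{enumerate}[label=(\roman*)]
\item Since $a^{ij}$ is smooth in $p$, the mean value theorem gives $a^{ij}(x,\nabla u)-a^{ij}(x,\nabla v)=b^{ijk}w_k$ with
\[
b^{ijk}=\int_0^1\partial_{p_k}a^{ij}\bigl(x,\nabla v+s\nabla w\bigr)\dd s,
\]
and this coefficient is bounded by Lemma~\ref{lem:ellipticity}. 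The first-order coefficient is therefore $c^k=\partial_kG+b^{ijk}v_{ij}$.
\item By \eqref{eq:T-monotone}, $\cT_\eta(x,u)-\cT_\eta(x,v)=d\,w$ with
\[
d=\mu\int_0^1\Lambda(x)h_\eta\bigl(v+s w-q\bigr)\dd s\ge0 .
\]
\item The linear equation is then $w_t-a^{ij}(t,x)w_{ij}-c^iw_i+dw=0$, with $a^{ij}(t,x)=a^{ij}(x,\nabla u(t,x))$ uniformly parabolic by \eqref{eq:ellipticity}.
\item Take $\pm w-k$. Because $d\ge0$, this function satisfies $(\pm w-k)_t-a^{ij}(\pm w-k)_{ij}-c^i(\pm w-k)_i+d(\pm w-k)=-dk\le0$. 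It is $\le0$ at $t=0$ by the choice of $k$, and on the lateral boundary it equals $-k\le0$ since $u=v=0$ there.
\end{enumerate}

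The main obstacle is that Lemma~\ref{lem:comparison} requires bounded continuous coefficients on $[t_0,T]\times\overline\Omega$. Here $c^i$ contains $v_{ij}$, which is only known to be continuous on $(0,T]\times\overline\Omega$ and may blow up as $t\downarrow0$. I would handle this by working on $[t_0,T]$ for a small $t_0>0$, where the coefficients are continuous up to the boundary and hence bounded. Comparison on $[t_0,T]$ gives
\[
\norm{w(t)}_{L^\infty}\le\norm{w(t_0)}_{L^\infty}\quad\text{for } t\ge t_0 .
\]
Since $u,v\in C([0,T];C^1(\overline\Omega))$, we have $\norm{w(t_0)}_{L^\infty}\to\norm{w(0)}_{L^\infty}$ as $t_0\downarrow0$, and letting $t_0\downarrow0$ yields the claim.

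Alternatively, the term $(a^{ij}(\nabla u)-a^{ij}(\nabla v))v_{ij}$ can be handled directly at a first interior maximum of $w-k-\delta(t-t_0)$: there $\nabla w=0$, so this term vanishes, and the proof of Lemma~\ref{lem:comparison} goes through pointwise. This route avoids needing any boundedness of $v_{ij}$ at all.

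Continuity of $d$ follows from the smoothness of $h_\eta$ and the continuity of $u$ and $v$, and $\Lambda$ is bounded, so $d$ is bounded.
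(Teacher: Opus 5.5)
Your proposal is correct and follows essentially the paper's proof. It uses a mean-value linearization of the difference with a nonnegative zeroth-order coefficient coming from $H_\eta'\ge0$, applies Lemma~\ref{lem:comparison} on $[t_0,T]$ with $t_0>0$ so that the drift containing second derivatives stays bounded, and lets $t_0\downarrow0$ using continuity in time. The differences are cosmetic: you freeze the principal part at $a^{ij}(x,\nabla u)$ where the paper averages over $v_\lambda=v+\lambda(u-v)$, and in (ii) your $d$ includes $\mu$, so it should read $\mu\bigl(\cT_\eta(x,u)-\cT_\eta(x,v)\bigr)=d\,w$. Your alternative pointwise argument is also correct: since $\nabla w=0$ at the maximum, bounded drift coefficients are not actually needed.
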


\begin{proof}
Set $z=u-v$.  For $\lambda\in[0,1]$ let $v_\lambda=v+\lambda(u-v)$.  By the mean-value formula, $z$ satisfies
\begin{equation}\label{eq:linearized-difference}
        z_t=\bar a^{ij}(t,x)z_{ij}+\bar c^i(t,x)z_i-
        \mu\bar r(t,x)z,
\end{equation}
where
\[
        \bar a^{ij}(t,x)=\int_0^1 a^{ij}(x,\nabla v_\lambda(t,x))\,\dd\lambda,
\]
\[
        \bar c^k(t,x)=\partial_{x_k}G(x)+
        \int_0^1
        \frac{\partial a^{ij}}{\partial p_k}(x,\nabla v_\lambda(t,x))
        (v_\lambda)_{ij}(t,x)\,\dd\lambda,
\]
and
\[
        \bar r(t,x)=\int_0^1 \partial_s\cT_\eta(x,v_\lambda(t,x))\,\dd\lambda.
\]
For every $\tau>0$, the positive-time classical regularity of $u$ and $v$ gives boundedness of $D^2u$ and $D^2v$ on $[\tau,T]\times\overline\Omega$.  Since the first $p$-derivatives of $a^{ij}$ are globally bounded by Lemma~\ref{lem:ellipticity}, the coefficients $\bar c^i$ are bounded on $[\tau,T]$.  On the same interval \eqref{eq:linearized-difference} is uniformly parabolic by Lemma~\ref{lem:ellipticity}.  Lemma~\ref{lem:reaction} gives $\bar r\ge0$.

Let $M=\norm{z(0)}_{L^\infty(\Omega)}$.  Fix $\delta>0$.  By continuity at $t=0$, choose
\[
 0<\tau_\delta<\min\{\delta,T\}
 \qquad\text{and}\qquad
 \norm{z(\tau_\delta)}_{L^\infty(\Omega)}\le M+\delta.
\]
On $[\tau_\delta,T]\times\Omega$, define
\[
\mathcal{L}w
:=
w_t-\bar a^{ij}w_{ij}-\bar c^iw_i+\mu\bar r\,w.
\]
All coefficients of $\mathcal L$ are bounded there, the principal matrix is
uniformly parabolic, and $\bar r\ge0$.  Since $\mathcal Lz=0$,
\[
 \mathcal L\bigl(z-(M+\delta)\bigr)
 =-\mu\bar r(M+\delta)\le0,
\]
and likewise
\[
 \mathcal L\bigl(-z-(M+\delta)\bigr)
 =-\mu\bar r(M+\delta)\le0.
\]
Both comparison functions are nonpositive on the lateral boundary and at
$t=\tau_\delta$.  Lemma~\ref{lem:comparison} therefore gives
\[
 \norm{z(t)}_{L^\infty(\Omega)}\le M+\delta,
 \qquad \tau_\delta\le t\le T.
\]
For each fixed $t>0$, one has $\tau_\delta<t$ for all sufficiently small
$\delta$.  Letting $\delta\downarrow0$ yields
$\norm{z(t)}_{L^\infty(\Omega)}\le M$ for every $t>0$, and continuity at
$t=0$ completes the proof on $[0,T]$.
\end{proof}

\section{Finite-time estimates and continuation}
\label{sec:continuation}

For each prescribed finite time \(T>0\), the estimates hold with constants
depending only on \(T\) and the fixed data. These bounds provide the control
required for the continuation argument.

We first prove the finite-time gradient estimate needed for continuation.  The proof has two parts.  Gradient estimates near $\partial\Omega$ control the normal derivative on $(0,T]\times\partial\Omega$, and interior gradient estimates then prevent the spatial gradient from attaining a larger value in the interior.

Here and below, $Q'\Subset Q$ means that $\overline{Q'}$ is a compact
subset of $Q$.
\begin{lemma}[Interior regularity improvement for positive times]
\label{lem:interior-bootstrap}
Let $u$ be a positive-time classical solution of
\eqref{eq:main-pde}. For every pair of parabolic cylinders
\[
Q'\Subset Q\Subset (0,T]\times\Omega,
\]
there exists $\sigma\in(0,\alpha]$ such that
\[
D_xu\in C^{1+\sigma/2,\,2+\sigma}(Q').
\]
Equivalently,
\[
u\in C^{1+\sigma/2,\,3+\sigma}(Q').
\]
Consequently, all derivatives required to differentiate the equation once
with respect to the spatial variables exist and are H\"older continuous at
every positive-time interior point.
\end{lemma}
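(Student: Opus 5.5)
We plan a standard interior bootstrap: first obtain Hölder continuity of $\nabla u$ in a slightly smaller cylinder, then of $D^2u$ and $u_t$ via linear Schauder theory, and finally differentiate the equation once in $x$ and apply interior Schauder estimates to the equation satisfied by difference quotients of $u$. Choose nested cylinders $Q'\Subset Q_2\Subset Q_1\Subset Q$. On $\overline Q$, which is compact in $(0,T]\times\Omega$, the definition of a positive-time classical solution gives $u\in C^{1,2}$, so $u$, $\nabla u$, $D^2u$ and $u_t$ are bounded and continuous there.

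\emph{Step 1 (Hölder regularity of second derivatives).} Freeze the gradient and view $u$ as a solution of the linear equation
\[
u_t-\tilde a^{ij}(t,x)u_{ij}=\tilde f(t,x),\qquad \tilde a^{ij}=a^{ij}(x,\nabla u),\quad \tilde f=\nabla G\cdot\nabla u-\mu\cT_\eta(x,u).
\]
By Lemma~\ref{lem:ellipticity}, $\tilde a^{ij}$ is uniformly parabolic with constant $\nupar$. Its $p$-derivatives are globally bounded, so its Hölder modulus is controlled by that of $\nabla u$. The difficulty is that $C^{1,2}$ alone gives only continuity, not a Hölder modulus, of $\nabla u$. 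To get one, I would use the bounded-measurable (Krylov--Safonov type) or $W^{2,1}_p$ interior theory: since $u_t-\tilde a^{ij}u_{ij}=\tilde f\in L^\infty$ with continuous uniformly parabolic coefficients, the interior $L^p$ estimates give $u\in W^{2,1}_p(Q_1)$ for all $p<\infty$. Parabolic Sobolev embedding with $p>d+2$ then gives $\nabla u\in C^{\sigma/2,\sigma}(Q_1)$ for some $\sigma\in(0,\alpha]$. This is the step I expect to be the main obstacle, since it is where continuity must be upgraded to Hölder continuity without a priori Schauder information. An alternative is to cite the interior gradient-Hölder estimate of \cite[Chapter~V]{LSU1968} for quasilinear equations, using the global boundedness of $D_pa^{ij}$ from Lemma~\ref{lem:ellipticity}. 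Once $\nabla u$ is Hölder, $\tilde a^{ij}$ and $\tilde f$ are $C^{\sigma/2,\sigma}$ on $Q_1$: here $G\in C^{3+\alpha}$, the bound \eqref{eq:T-composition} applies, and $u$ is Lipschitz. Interior linear Schauder estimates then give $u\in C^{1+\sigma/2,2+\sigma}(Q_2)$.

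\emph{Step 2 (one spatial derivative).} Fix a direction $e_k$ and form the difference quotients $w^h=(u(t,x+he_k)-u(t,x))/h$ on $Q_2$ for small $h$. By the mean-value formula, as in the proof of Lemma~\ref{lem:stability-local}, $w^h$ satisfies a linear equation
\[
w^h_t-\hat a^{ij}w^h_{ij}-\hat c^iw^h_i-\hat d\,w^h=\hat f.
\]
The coefficients are averages of $a^{ij}$, of $D_pa^{ij}\,u_{ij}$ and of $\partial_r\cT_\eta$ along segments, together with the $x$-difference quotients of $a^{ij}$, $\nabla G$ and $\Lambda$. Using Step 1 and the assumptions $G\in C^{3+\alpha}$, $\Lambda\in C^{1+\alpha}$, $H_\eta$ smooth, these coefficients and $\hat f$ are bounded in $C^{\sigma/2,\sigma}(Q_2)$ uniformly in $h$. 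The functions $w^h$ are bounded in $C^{\sigma/2,\sigma}$ by Step 1 as well. The interior Schauder estimate then bounds $w^h$ in $C^{1+\sigma/2,2+\sigma}(Q')$ uniformly in $h$. Letting $h\to0$ and using Arzelà--Ascoli together with lower semicontinuity of Hölder seminorms gives $u_k\in C^{1+\sigma/2,2+\sigma}(Q')$, which is the claimed conclusion. The limit equation is exactly the once-differentiated PDE, which yields the final sentence of the lemma. Shrinking $\sigma$ if necessary at each step, so that $\sigma\le\alpha$ is compatible with the $C^{1+\alpha}$ regularity of $\Lambda$, completes the plan.
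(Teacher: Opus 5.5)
Your proposal is correct and follows essentially the paper's route. The paper also forms the difference quotients $v_h$, bounds the coefficients $A_h,B_h,C_h,f_h$ in $C^{\sigma/2,\sigma}$ uniformly in $h$, applies interior Schauder estimates and passes to the limit by compactness. It simply asserts the preliminary $C^{1+\beta/2,2+\beta}$ regularity that your Step~1 justifies, and that step is easier than you expect: $u_t$ and $D^2u$ are continuous, hence bounded, on the compact set $\overline Q$, so Taylor's formula gives $|\nabla u(t,x)-\nabla u(s,x)|\le C\bigl(|t-s|/r+r\bigr)$, and the choice $r=|t-s|^{1/2}$ shows $\nabla u$ is already Lipschitz in $x$ and $C^{1/2}$ in $t$ there, with no $W^{2,1}_p$ theory needed.
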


This interior regularity improvement justifies the spatial differentiation
used in the Bernstein gradient estimate.
\begin{proof}
Choose $Q''$ with $Q'\Subset Q''\Subset Q$.  Positive-time local
regularity gives $u\in C^{1+\beta/2,2+\beta}(Q)$ for some $\beta>0$.
For a coordinate direction $e_k$ set
\[
 v_h(t,x)=\frac{u(t,x+he_k)-u(t,x)}{h}.
\]
Writing $u^h(t,x)=u(t,x+he_k)$ and subtracting the equations at
$(t,x+he_k)$ and $(t,x)$ gives
\[
 (v_h)_t-A_h^{ij}(v_h)_{ij}-B_h^\ell(v_h)_\ell-C_hv_h=f_h,
\]
where
\[
 A_h^{ij}=a^{ij}(x+he_k,\nabla u^h),
\]
\[
 B_h^\ell=
 \partial_{x_\ell}G(x+he_k)
 +\int_0^1\partial_{p_\ell}a^{ij}
 \bigl(x+he_k,\nabla u+s(\nabla u^h-\nabla u)\bigr)\,ds\;u_{ij},
\]
\[
 C_h=-\mu\Lambda(x+he_k)
 \int_0^1H_\eta'\bigl(u+s(u^h-u)-q\bigr)\,ds,
\]
and
\[
\begin{aligned}
 f_h={}&
 \frac{a^{ij}(x+he_k,\nabla u)-a^{ij}(x,\nabla u)}{h}\,u_{ij}\\
 &+\frac{\partial_{x_i}G(x+he_k)-\partial_{x_i}G(x)}{h}\,u_i
 -\mu\frac{\Lambda(x+he_k)-\Lambda(x)}{h}H_\eta(u-q).
\end{aligned}
\]
All unshifted functions on the right are evaluated at $(t,x)$.  The
term $\partial_{x_\ell}G(x+he_k)(v_h)_\ell$ in $B_h^\ell(v_h)_\ell$
is the difference quotient of the original drift $\nabla G\cdot\nabla u$;
the remaining drift difference is the second term in $f_h$.  Choose
$0<\sigma<\min\{\alpha,\beta\}$.  The composition and product estimates
in parabolic H\"older spaces give, uniformly for all sufficiently small $h$,
\[
 \|A_h\|_{C^{\sigma/2,\sigma}(Q'')}
 \le C\bigl(1+\|u\|_{C^{1+\beta/2,2+\beta}(Q)}\bigr),
\]
\[
 \|B_h\|_{C^{\sigma/2,\sigma}(Q'')}
 \le C\bigl(1+\|D^2u\|_{C^{\beta/2,\beta}(Q)}\bigr),
 \qquad
 \|C_h\|_{C^{\sigma/2,\sigma}(Q'')}\le C,
\]
and
\[
 \|f_h\|_{C^{\sigma/2,\sigma}(Q'')}
 \le C\bigl(1+\|u\|_{C^{1+\beta/2,2+\beta}(Q)}\bigr).
\]
Here the difference quotients of $a^{ij}$, $\nabla G$, and $\Lambda$ are
controlled by their $C^{1+\alpha}$ norms.  More explicitly, on the bounded
positive-time gradient range,
\[
 \bigl|D_pa(x_1,p_1)-D_pa(x_2,p_2)\bigr|
 \le \|D^2_{xp}a\|_{L^\infty}|x_1-x_2|
      +\|D^2_{pp}a\|_{L^\infty}|p_1-p_2|.
\]
Applying this inequality with
$p_j=\nabla u(t_j,x_j)+s(\nabla u^h-\nabla u)(t_j,x_j)$,
using the positive-time parabolic H\"older norm of $\nabla u$, and then the
product estimate with $D^2u$, gives the displayed bound for $B_h$ uniformly
in $h$.  The same mean-value and product estimates control $A_h$, $C_h$, and
$f_h$.  Thus the constant is independent of $h$.  Moreover $A_h$ has
ellipticity constant $\nupar$.
Interior linear Schauder estimates therefore give, for some
$\sigma\in(0,\min\{\alpha,\beta\}]$,
\[
 \|v_h\|_{C^{1+\sigma/2,2+\sigma}(Q')}
 \le C,
\]
with $C$ independent of $h$. Choose $0<\sigma'<\sigma$. The uniform
estimate above and the compactness of the embedding of
\[
C^{1+\sigma/2,2+\sigma}(Q')
\]
into
\[
C^{1+\sigma'/2,2+\sigma'}(Q')
\]
give a sequence $h_j\to0$ and a function $v$ such that
\[
v_{h_j}\longrightarrow v
\quad\text{in}\quad
C^{1+\sigma'/2,2+\sigma'}(Q').
\]
Since the difference quotients $v_{h_j}$ converge to $u_{x_k}$, the limit satisfies
\[
v=u_{x_k}.
\]
Therefore,
\[
u_{x_k}\in C^{1+\sigma'/2,\,2+\sigma'}(Q').
\]
Since $k$ is arbitrary,
\[
D_xu\in C^{1+\sigma'/2,\,2+\sigma'}(Q').
\]
Renaming $\sigma'$ as $\sigma$ gives the asserted interior regularity.
\end{proof}

\begin{proposition}[Finite-time gradient estimate]\label{prop:finite-gradient}
Let $U$ be a positive-time classical solution of \eqref{eq:main-pde} on $[0,S]$ with $U(t,x)=0$ for $t>0$ and $x\in\partial\Omega$, initial datum $U^0\in C^{2+\alpha}(\overline\Omega)$ satisfying $U^0=0$ on $\partial\Omega$, $U(t)\to U^0$ in $C^1(\overline\Omega)$ as $t\downarrow0$, and $0\le U\le1$.  Then there exists a constant
\[
        C_{\nabla}=C_{\nabla}\bigl(S,\Omega,d,\eps,\nupar,g_*,g^*,
        \norm{G}_{C^{3+\alpha}},\norm{\Lambda}_{C^{1+\alpha}},
        \mu,q,\eta,H,\norm{U^0}_{C^{2+\alpha}}\bigr)
\]
such that
\[
        \norm{\nabla U}_{L^\infty((0,S)\times\Omega)}\le C_{\nabla}.
\]
\end{proposition}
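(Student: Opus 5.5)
The plan has two steps. First we bound the normal derivative on the lateral boundary by barrier functions, which turns the problem into controlling $|\nabla U|$ on the parabolic boundary. Then a Bernstein maximum principle argument for $w=|\nabla U|^2$ transfers that control into the interior. Throughout, the structure from Lemma~\ref{lem:ellipticity} is essential. The equation is $U_t=a^{ij}(x,\nabla U)U_{ij}+\nabla G\cdot\nabla U-\mu\cT_\eta(x,U)$ with $\nupar|\xi|^2\le a^{ij}\xi_i\xi_j\le(\nupar+g^*)|\xi|^2$ uniformly in $p$. The first-order terms have only linear growth in $|\nabla U|$, and the zeroth-order term is bounded by $M_\eta$ from Lemma~\ref{lem:reaction}.

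\emph{Boundary gradient.} Because $0\le U\le1$ and $U=0$ on $\partial\Omega$, it suffices to bound $|\partial_\nu U|$ on $(0,S]\times\partial\Omega$; the tangential derivatives vanish there. Let $\rho(x)=\operatorname{dist}(x,\partial\Omega)$, which is $C^2$ on a strip $\{\rho<\rho_0\}$ since $\partial\Omega\in C^{4+\alpha}$. The lower barrier is $U\ge0$. For the upper barrier take $\psi(x)=k^{-1}\log(1+N\rho(x))$ or $\psi=A(1-e^{-k\rho})$. We then check $\psi_t-a^{ij}\psi_{ij}-\nabla G\cdot\nabla\psi+\mu\cT_\eta\ge0$ in the strip. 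Since $a^{ij}\rho_i\rho_j\ge\nupar$, the concavity term $-\psi''a^{ij}\rho_i\rho_j\ge \nupar k|\psi'|^2$-type dominates the terms $|\psi'|(|D^2\rho|(\nupar+g^*)+|\nabla G|)$ and the bounded $-\mu\cT_\eta\ge -\mu M_\eta$ once $k$ is large. Here the uniform ellipticity in $\nabla\psi$ is what makes a fixed barrier work. The constants are then chosen so that $\psi\ge1\ge U$ on $\{\rho=\rho_0\}$.

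At $t=0$ we need $\psi\ge U^0$. This holds by taking the slope $\psi'(0)$ larger than $\norm{U^0}_{C^1}$: since $U^0=0$ on $\partial\Omega$, $U^0\le\norm{\nabla U^0}_\infty\rho$. Comparison via Lemma~\ref{lem:comparison} applies to the difference. For this, the nonlinear operator difference is linearized as in Lemma~\ref{lem:stability-local}, using the monotonicity of $\cT_\eta$ (or simply treating $\cT_\eta(x,U)$ as a given bounded source, since $\psi$ absorbs $\mu M_\eta$). The result is $0\le U\le\psi$ in the strip, and hence $|\partial_\nu U|\le\psi'(0)$ on $\partial\Omega$. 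The work starts at a small positive time $\tau$ and uses $C^1$ continuity at $t=0$, as in Lemma~\ref{lem:stability-local}.

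\emph{Interior Bernstein estimate.} This is the main obstacle. By Lemma~\ref{lem:interior-bootstrap} we may differentiate the equation in $x_k$ at interior positive-time points. Set $w=|\nabla U|^2$ and compute
\[
w_t-a^{ij}w_{ij}-b^\ell w_\ell=-2a^{ij}U_{ki}U_{kj}+2U_k\partial_{x_k}a^{ij}U_{ij}+2U_kG_{k\ell}U_\ell-2\mu U_k\partial_{x_k}\cT_\eta-2\mu\partial_r\cT_\eta\, w .
\]
Here $b^\ell=\partial_{p_\ell}a^{ij}U_{ij}+G_\ell$. The last term is $\le0$ by \eqref{eq:T-monotone}. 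The good term $-2\nupar|D^2U|^2$ absorbs the bad term $2|\nabla U||\nabla G||D^2U|$ by Cauchy--Schwarz, leaving $C(1+w)$; the remaining terms are also $\le C(1+w)$. This is where $\Lambda\in C^{1}$ and $G\in C^2$ are used, and $|D_pa|$ is bounded independently of $p$. We therefore obtain $L w\le C(1+w)$ for the linear operator $L$ with the frozen coefficients $a^{ij}(x,\nabla U)$ and $b^\ell$. The function $e^{-Ct}(1+w)$ is then a subsolution.

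The maximum principle (the argument of Lemma~\ref{lem:comparison}, applied at interior maximum points on $[\tau,S]$) now bounds $\sup w$ by $e^{CS}$ times the maximum over the parabolic boundary. On the lateral boundary this is bounded by step one. At $t=\tau$ it is bounded by $\norm{U^0}_{C^1}+o(1)$, and we let $\tau\downarrow0$. The main delicacy is the regularity of $w$: near $\partial\Omega$ we only know $U\in C^{1,2}$. To handle this I would argue only at an interior maximum point, which suffices since Lemma~\ref{lem:interior-bootstrap} gives $C^{3}$ there, and rely on continuity of $\nabla U$ up to $\overline\Omega$ for the boundary values. All constants depend only on the listed data, giving $C_\nabla$.
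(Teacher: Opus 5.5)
Your proposal is correct and follows essentially the paper's route. The exponential barrier $A(1-e^{-k\rho})$ is compared with $U$ on the boundary strip from a positive time $\tau$, giving $|\nabla U|\le Ak$ on $\partial\Omega$. A Bernstein argument for $e^{-Ct}(1+|\nabla U|^2)$ at interior maxima, justified by Lemma~\ref{lem:interior-bootstrap}, then carries this bound inward. Keeping $\partial_{p_\ell}a^{ij}U_{ij}\,w_\ell$ as a drift that vanishes at the maximum is equivalent to the paper's use of $\nabla w=0$.

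Two small points. First, for a concave barrier, $\psi'(0)\ge\norm{\nabla U^0}_{L^\infty}$ alone does not give $\psi\ge U^0$ on the strip. Require instead $\psi(\rho_0)\ge\norm{\nabla U^0}_{L^\infty}\rho_0$, achieved by enlarging $A$; concavity together with $U^0\le\norm{\nabla U^0}_{L^\infty}\rho$ then suffices. Second, the reaction enters the supersolution inequality as $+\mu\cT_\eta(x,\psi)\ge0$, so there is no need to absorb $\mu M_\eta$.
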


\begin{proof}
Write the equation in nondivergence form as
\begin{equation}\label{eq:tailored-gradient-pde}
        U_t=a^{ij}(x,\nabla U)U_{ij}+b^i(x)U_i+F(x,U),
\end{equation}
where
\[
        b=\nabla G,
        \qquad
        F(x,r)=-\mu\Lambda(x)H_\eta(r-q),
\]
and
\[
        a^{ij}(x,p)=\nupar\delta_{ij}
        +G(x)\left(\delta_{ij}-\frac{p_i p_j}{\eps^2+|p|^2}\right).
\]
By Lemma~\ref{lem:ellipticity},
\[
        \nupar|\xi|^2\le a^{ij}(x,p)\xi_i\xi_j
        \le (\nupar+g^*)|\xi|^2.
\]
Moreover $D_xa^{ij}$ is bounded independently of $p$, $D_p a^{ij}$ is bounded because $\eps>0$, $b\in C^{2+\alpha}(\overline\Omega)$, and $F_x,F_r$ are bounded on $\overline\Omega\times[0,1]$, with
\[
        F_r(x,r)=-\mu\Lambda(x)H_\eta'(r-q)\le0.
\]

We first control the boundary gradient.  Since $\partial\Omega$ is $C^{4+\alpha}$, there are $\delta_0>0$ and $M_\rho>0$ such that the interior distance function
\[
        \rho(x)=\operatorname{dist}(x,\partial\Omega)
\]
is $C^2$ in the neighborhood of the boundary $\Omega_{\delta_0}=\{x\in\Omega:0<\rho(x)<\delta_0\}$ and
\[
        |D^2\rho|\le M_\rho \quad\text{there}.
\]
Let $n=\nabla\rho$ be the inward unit normal.  Choose $\kappa>0$ so large that
\[
        \kappa\nupar
        \ge
        1+d(\nupar+g^*)M_\rho+\norm{\nabla G}_{L^\infty(\Omega)}.
\]
For constants $M>0$ to be fixed, set
\[
        W(x)=M\bigl(1-e^{-\kappa\rho(x)}\bigr).
\]
Increasing $M$ if necessary, we may arrange that
\[
        W\ge1 \quad\text{on } \{\rho=\delta_0\},
        \qquad
        W\ge U^0 \quad\text{in } \Omega_{\delta_0}.
\]
The second inequality is possible because $U^0=0$ on $\partial\Omega$ and $U^0\in C^1(\overline\Omega)$; equivalently, the quotient
\[
        \frac{U^0(x)}{1-e^{-\kappa\rho(x)}}
\]
extends boundedly to the boundary.

In the neighborhood of the boundary,
\[
        \nabla W=M\kappa e^{-\kappa\rho}n,
        \qquad
        D^2W=-M\kappa^2 e^{-\kappa\rho} n\otimes n
        +M\kappa e^{-\kappa\rho}D^2\rho.
\]
Equivalently, if $\phi(s)=M(1-e^{-\kappa s})$, then $\phi'>0$ and $\phi''=-\kappa\phi'$, so
\[
\begin{aligned}
        &W_t-a^{ij}(x,\nabla W)W_{ij}-b(x)\cdot\nabla W
        +\mu\Lambda(x)H_\eta(W-q) \\
        &\qquad
        =-a^{ij}(x,\phi'n)\bigl(\phi'' n_i n_j+\phi'\rho_{ij}\bigr)
        -\phi'\nabla G\cdot n
        +\mu\Lambda(x)H_\eta(W-q) \\
        &\qquad
        \ge
        \phi'\bigl(\kappa\nupar-d(\nupar+g^*)M_\rho-\norm{\nabla G}_{L^\infty}\bigr)
        \ge0.
\end{aligned}
\]
Thus $W$ is a supersolution in the neighborhood of the boundary.  We compare it with $U$ by applying Lemma~\ref{lem:comparison} to the linearized difference $z=U-W$.  Indeed,
\[
\begin{aligned}
        &a^{ij}(x,\nabla U)U_{ij}-a^{ij}(x,\nabla W)W_{ij} \\
        &\qquad =a^{ij}(x,\nabla U)z_{ij}
        +\left(\int_0^1 \partial_{p_k}a^{ij}\bigl(x,\nabla W+s\nabla z\bigr)\,ds\right)W_{ij}z_k,
\end{aligned}
\]
and
\[
        \cT_\eta(x,U)-\cT_\eta(x,W)
        =\left(\int_0^1 \partial_r\cT_\eta(x,W+s z)\,ds\right)z .
\]
Equivalently, $z$ satisfies an inequality of the form
\[
        z_t-\tilde a^{ij}z_{ij}-\tilde c^k z_k+\tilde d z\le0,
\]
where $\tilde a^{ij}=a^{ij}(x,\nabla U)$ is uniformly elliptic and $\tilde c^k$ is bounded in the boundary neighborhood.  Indeed, this bound uses only the global bound for $D_pa$ and the fixed $C^2$ norm of $W$, and hence does not depend on an a priori bound for $\nabla U$.  Moreover,
\[
        \tilde d
        =\mu\int_0^1\Lambda(x)H_\eta'(W+s z-q)\,\dd s\ge0.
\]
This explicit sign follows exactly from the monotonicity of $r\mapsto H_\eta(r-q)$.  To avoid using the equation at the initial corner $t=0$, fix $\tau>0$ and set
\[
 \varepsilon_\tau=\bigl\|(U(\tau)-W)_+\bigr\|_{L^\infty(\Omega_{\delta_0})}.
\]
Because $U(t)\to U^0$ in $C^1(\overline\Omega)$ and $U^0\le W$, one has $\varepsilon_\tau\to0$.  Since adding a nonnegative constant leaves the principal part unchanged and increases the nondecreasing reaction, $W+\varepsilon_\tau$ is again a supersolution.  On the parabolic boundary of $[\tau,S]\times\Omega_{\delta_0}$ we have
$U(\tau)\le W+\varepsilon_\tau$, $U=0\le W+\varepsilon_\tau$ on $\{\rho=0\}$, and $U\le1\le W+\varepsilon_\tau$ on $\{\rho=\delta_0\}$.  Lemma~\ref{lem:comparison} therefore yields
\[
        U(t,x)\le W(x)+\varepsilon_\tau,
        \qquad \tau\le t\le S,
        \quad 0\le\rho(x)\le\delta_0.
\]
Letting $\tau\downarrow0$ and using continuity gives
\[
        0\le U(t,x)\le W(x),
        \qquad 0\le t\le S,
        \quad 0\le\rho(x)\le\delta_0.
\]
On $\partial\Omega$ the tangential derivatives of $U$ vanish because $u(t,x)=0$ for all $x\in\partial\Omega$ and positive times.  The preceding inequality and $U\ge0$ imply
\[
        0\le \partial_n U(t,x)\le \partial_n W(x)=M\kappa,
        \qquad x\in\partial\Omega,
        \quad 0<t<S.
\]
Therefore
\begin{equation}\label{eq:boundary-gradient-bound}
        |\nabla U(t,x)|\le M\kappa,
        \qquad x\in\partial\Omega,
        \quad 0<t<S.
\end{equation}

It remains to exclude a larger interior maximum.  Put
\[
        w=\frac12|\nabla U|^2.
\]

Fix $\tau>0$. By Lemma~\ref{lem:interior-bootstrap}, on every compact
positive-time interior cylinder the solution has the spatial derivatives
needed to differentiate the equation once and to evaluate the resulting
identity pointwise. This is sufficient because the spatial boundary values
of $|\nabla U|$ have already been controlled by
\eqref{eq:boundary-gradient-bound}. Hence, any larger maximum on
\[
[\tau,S']\times\overline{\Omega},
\qquad S'<S,
\]
is attained at a spatially interior point, where the interior regularity
result applies. If such a maximum lies on the terminal slice $t=S'$, the
time derivative is interpreted from the left; at a terminal-time maximum
one still has $(Y_\delta)_t\geq0$. The calculation is therefore classical
on each such finite time cylinder, and the final inequality depends only
on the structural constants displayed below, rather than on any higher
norm of $U$. At any interior point where $\nabla w=0$, differentiating \eqref{eq:tailored-gradient-pde}, multiplying by $U_k$, and summing over $k$ gives
\[
\begin{aligned}
        w_t-a^{ij}(x,\nabla U)w_{ij}
        &=U_k\,\partial_{x_k}a^{ij}(x,\nabla U)U_{ij}
        +U_k\,\partial_{p_\ell}a^{ij}(x,\nabla U)U_{\ell k}U_{ij} \\
        &\quad+U_k\,\partial_{x_k}b^i(x)U_i
        +U_k b^i(x)U_{ik}
        +U_kF_{x_k}(x,U)+F_r(x,U)|\nabla U|^2 \\
        &\quad-a^{ij}(x,\nabla U)U_{ki}U_{kj}.
\end{aligned}
\]
The identities $w_\ell=U_kU_{k\ell}=0$ imply, for every $\ell$ and every $i$, that
\[
        U_kU_{k\ell}=0,
        \qquad
        U_kU_{ki}=0.
\]
Consequently the term containing $\partial_{p_\ell}a^{ij}$ and the transport term $U_kb^iU_{ik}$ vanish at such a point.  At such a point, ellipticity and Young's inequality give
\[
 -a^{ij}U_{ki}U_{kj}\le-\nupar|D^2U|^2,
\]
while
\[
 |U_k\partial_{x_k}a^{ij}U_{ij}|
 \le C|\nabla U||D^2U|
 \le \frac{\nupar}{4}|D^2U|^2+C_\nu|\nabla U|^2.
\]
Furthermore,
\[
 |U_k(\partial_{x_k}b^i)U_i|\le C|\nabla U|^2,
 \qquad
 |U_kF_{x_k}(x,U)|\le C|\nabla U|\le C(1+w),
\]
and $F_r|\nabla U|^2\le0$.  Therefore
\[
        w_t-a^{ij}(x,\nabla U)w_{ij}
        \le
        -\frac{3\nupar}{4}|D^2U|^2+C_0(1+w)
        \le C_0(1+w)
\]
at every interior point where $\nabla w=0$.  Here $C_0$ depends only on the data listed in the statement.

Choose $C>C_0$ and define
\[
        Y(t,x)=e^{-Ct}(w(t,x)+1).
\]
For $\delta>0$ set $Y_\delta=Y-\delta(t-\tau)$.  If $Y_\delta$ attained a positive maximum at an interior point with time larger than $\tau$, then $\nabla w=0$ there and
\[
\begin{aligned}
        0&\le (Y_\delta)_t-a^{ij}(x,\nabla U)(Y_\delta)_{ij}\\
        &=e^{-Ct}\bigl(w_t-a^{ij}(x,\nabla U)w_{ij}-C(w+1)\bigr)-\delta<0,
\end{aligned}
\]
a contradiction.  Thus, on $[\tau,S']\times\overline\Omega$, the maximum of $Y_\delta$
lies on the initial slice $t=\tau$ or on the spatial boundary.  Therefore, after letting $\delta\downarrow0$, the maximum of $Y$ on
$[\tau,S']\times\overline{\Omega}$ is attained either on the initial
slice $t=\tau$ or on the spatial boundary.
Since the resulting bound is independent of $S'<S$, we may let $S'\uparrow S$.  Since $U(t)\to U^0$ in $C^1(\overline\Omega)$ as $t\downarrow0$, the contribution from the time slice $t=\tau$ converges to $\frac12\norm{\nabla U^0}_{L^\infty}^2$ as $\tau\downarrow0$.  Combining this with \eqref{eq:boundary-gradient-bound} and then letting $\tau\downarrow0$ gives
\[
        \sup_{(0,S)\times\Omega}w
        \le
        e^{CS}\left(1+\frac12\max\left\{
        \norm{\nabla U^0}_{L^\infty(\Omega)}^2,
        M^2\kappa^2
        \right\}\right)-1.
\]
Since \(w=\frac12|\nabla U|^2\), the desired gradient bound follows.
\end{proof}

\begin{lemma}[Finite-time $C^1$ bound]\label{lem:gradient}
Let $u$ be a positive-time classical solution of \eqref{eq:main-pde} on $[0,T_*)$, where $0<T_*\le\infty$, and assume $0\le u\le1$.  For every finite $S>0$ there exists a constant $C_S<\infty$, depending only on $S$, $\Omega$, $d$, $\eps$, $\nupar$, $g_*$, $g^*$, $\norm{G}_{C^{3+\alpha}(\overline\Omega)}$, $\norm{\Lambda}_{C^{1+\alpha}(\overline\Omega)}$, the parameters $\mu,q,\eta,H$, and $\norm{u^0}_{C^{2+\alpha}}$, such that
\begin{equation}\label{eq:gradient-bound}
        \sup_{0\le t<\min\{S,T_*\}}\norm{u(t)}_{C^1(\overline\Omega)}\le C_S.
\end{equation}
\end{lemma}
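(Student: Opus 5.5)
The bound \eqref{eq:gradient-bound} follows by combining the invariant range with Proposition~\ref{prop:finite-gradient}. The $C^1(\overline\Omega)$ norm splits as $\norm{u(t)}_{L^\infty}+\norm{\nabla u(t)}_{L^\infty}$. The zeroth-order part is handled by the hypothesis $0\le u\le1$, which Proposition~\ref{prop:range} supplies in any case for positive-time classical solutions with data in $[0,1]$. Hence $\norm{u(t)}_{L^\infty}\le1$ for all $t<T_*$, with no dependence on any further data.

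For the gradient, fix a finite $S>0$ and any $S'<\min\{S,T_*\}$. On $[0,S']$ the restriction of $u$ is a positive-time classical solution in the sense of Definition~\ref{def:classical}. In particular $u\in C([0,S'];C^1(\overline\Omega))$, so $u(t)\to u^0$ in $C^1(\overline\Omega)$ as $t\downarrow0$. Moreover $u=0$ on $(0,S']\times\partial\Omega$, $u^0\in C^{2+\alpha}(\overline\Omega)$ with $u^0=0$ on $\partial\Omega$, and $0\le u\le1$. These are exactly the hypotheses of Proposition~\ref{prop:finite-gradient} with $U=u$ on $[0,S']$. That proposition yields
\[
\norm{\nabla u}_{L^\infty((0,S')\times\Omega)}\le C_\nabla(S',\dots).
\]

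The one point requiring care is that the constant must be uniform in $S'$, so that one can pass to $S'\uparrow\min\{S,T_*\}$. Tracing the proof of Proposition~\ref{prop:finite-gradient} shows that the interval length enters only through the factor $e^{CS'}$. The barrier constants $M$ and $\kappa$, and the constant $C_0$, depend only on the structural data and on $\norm{u^0}_{C^{2+\alpha}}$. So the constant is nondecreasing in the interval length, and I replace $C_\nabla(S',\dots)$ by $C_\nabla(S,\dots)$. Since the bound is then independent of $S'$, letting $S'\uparrow\min\{S,T_*\}$ gives the gradient estimate on $[0,\min\{S,T_*\})$.

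The value at $t=0$ is covered either by continuity in $C^1$ or directly by $\norm{\nabla u^0}_{L^\infty}\le\norm{u^0}_{C^{2+\alpha}}$. Setting $C_S=1+C_\nabla(S,\dots)$ gives \eqref{eq:gradient-bound} with the dependence listed in the statement. I expect no real obstacle: the only delicate step is the monotonicity of the constant in the interval length, which is verified by inspection of the explicit final bound in the proof of Proposition~\ref{prop:finite-gradient}.
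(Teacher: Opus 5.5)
Your proposal is correct and follows the paper's proof. The paper also takes the $C^0$ part from the range bound, applies Proposition~\ref{prop:finite-gradient} on $[0,S']$ for $S'<\min\{S,T_*\}$, and lets $S'\uparrow\min\{S,T_*\}$. Your check that the constant depends on the interval length only through the factor $e^{CS'}$, and is therefore monotone, spells out the uniformity in $S'$ that the paper asserts without comment.
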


\begin{proof}
The \(C^0\) part follows from the range estimate. Fix
\(S'<\min\{S,T_*\}\). On \([0,S']\), the solution satisfies
\(u(t,x)=0\) for \(t>0\) and \(x\in\partial\Omega\), its initial datum
satisfies the boundary condition, and \(0\leq u\leq1\) by
Proposition~\ref{prop:range}. Proposition~\ref{prop:finite-gradient}
therefore gives
\[
\norm{\nabla u}_{L^\infty((0,S')\times\Omega)}\leq C_S,
\]
where \(C_S\) depends only on the data listed in the statement and is
valid for every \(S'<\min\{S,T_*\}\). Letting
\(S'\uparrow\min\{S,T_*\}\) gives \eqref{eq:gradient-bound}.
\end{proof}

\begin{lemma}[H\"older-gradient estimate]
\label{lem:quoted-gradient}
Fix $0<\tau_0\leq T$. For each $T'\in[\tau_0,T]$, let $z$ be an already
existing classical solution of
\[
z_t-A^{ij}(X,z,Dz)z_{ij}=F(X,z,Dz)
\qquad\text{in }Q_{T'}:=(0,T')\times\Omega,
\]
with prescribed data on the parabolic boundary
\[
\partial_pQ_{T'}
=
(\{0\}\times\overline{\Omega})
\cup
([0,T']\times\partial\Omega).
\]
Assume the hypotheses of \cite[Theorem~4.7]{Lieberman1986} with
source exponent $\gamma=2$: for some $K,\lambda_K,\mu_K>0$,
\begin{align*}
|z|+|Dz|&\leq K,\\
A^{ij}(X,z,Dz)\xi_i\xi_j
&\geq \lambda_K|\xi|^2,
\qquad \xi\in\mathbb{R}^d,
\end{align*}
and, in the notation of that theorem,
\[
|A^{ij}_z|+|A^{ij}_p|
+(d^*)^{\gamma-2}
\bigl(
|A^{ij}_x|+|A^{ij}_t|+|F|
\bigr)
\leq \mu_K
\quad\text{in }Q_{T'}.
\]
Assume also that $\partial Q_{T'}\in H_\gamma$ and that the prescribed
parabolic-boundary datum belongs to
$H_\gamma(\partial_pQ_{T'})$. Then there exist
$\beta\in(0,1)$ and $C<\infty$ such that
\[
\|Dz\|_{C^{\beta/2,\beta}(\overline{Q}_{T'})}
\leq C
\qquad\text{for every }T'\in[\tau_0,T].
\]
The exponent $\beta$ and the constant $C$ may be chosen uniformly for
$T'\in[\tau_0,T]$ whenever the structural quantities, the bounds for
$z$ and $Dz$, and the parabolic-boundary data are uniformly controlled.
\end{lemma}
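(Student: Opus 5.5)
The plan is to treat this as a direct application of Lieberman's global H\"older-gradient theorem \cite[Theorem~4.7]{Lieberman1986} with $\gamma=2$, followed by a check that every quantity entering its constants is controlled uniformly in $T'\in[\tau_0,T]$. The hypotheses listed in the statement are exactly those of the quoted theorem with $(d^*)^{\gamma-2}=1$. So the work is twofold: translate the structural conditions into that theorem's notation, and track how the output depends on $T'$.

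First I would fix $T'$ and observe the following. The bound $|z|+|Dz|\le K$, the ellipticity $\lambda_K$, and the structure constant $\mu_K$ are exactly the quantities on which \cite[Theorem~4.7]{Lieberman1986} lets $\beta$ and $C$ depend. The other inputs are the geometry of $\partial Q_{T'}$, namely its $H_\gamma$ norm, and the $H_\gamma$ norm of the boundary datum. Applying the theorem on $Q_{T'}$ then yields $\beta(T')\in(0,1)$ and $C(T')$ with $\|Dz\|_{C^{\beta/2,\beta}(\overline Q_{T'})}\le C(T')$.

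Second, I would establish uniformity in $T'$ by noting two facts. The cylinder $Q_{T'}=(0,T')\times\Omega$ is a time-truncation of the fixed spatial domain, so its lateral boundary regularity comes only from $\partial\Omega$. Its only $T'$-dependence is the height $T'\in[\tau_0,T]$, which is bounded above by $T$ and below by $\tau_0>0$. The lower bound matters because the constants in a H\"older estimate over a short cylinder can degenerate when the height is small. The data on $\partial_pQ_{T'}$ are restrictions of one fixed function on $\partial_pQ_T$, so their $H_\gamma$ norms are bounded by the norm on $Q_T$. Since $K,\lambda_K,\mu_K$ are assumed uniform, every input to the theorem is uniform, and hence so are $\beta$ and $C$. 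One may then take $\beta$ to be the minimum over this uniform choice and $C$ the maximum.

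The main obstacle I expect is making precise that Lieberman's constants depend on the domain only through the $H_\gamma$ character of $\partial Q_{T'}$, and that this character is uniform as $T'$ varies. The top face $t=T'$ is not part of the parabolic boundary, so it imposes no condition. Still, the H\"older estimate near $t=T'$ comes from interior and lateral local estimates on backward cylinders, and those are translation-invariant in time. I would handle this by deriving the estimate once on $Q_T$ and restricting it, whenever $z$ extends to $Q_T$. Otherwise I would argue with local estimates in backward parabolic cylinders whose size is fixed by $\tau_0$ and $\partial\Omega$, and patch them together with a finite covering whose cardinality is bounded in terms of $T/\tau_0$.
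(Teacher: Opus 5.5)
Your outline is sound, but you secure uniformity in $T'$ by a different route from the paper.

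In the paper, the lemma is simply Lieberman's Theorem~4.7 quoted on a single cylinder. Uniformity is obtained where the lemma is used (Lemma~\ref{lem:gradient-holder} and Appendix~\ref{app:gradient-input}) through the rescaling $s=t/T'$, which places every problem on the one fixed cylinder $(0,1)\times\Omega$. The $T'$-dependence then sits only in the coefficients $T'A^{ij}$ and $T'F$. Their ellipticity is at least $\tau_0\lambda_K$ and their structure constants are bounded in terms of $T$ and $\mu_K$. The rescaled boundary datum does not depend on $T'$. Returning to $t$ costs the factor $(T')^{-\beta/2}\le\tau_0^{-\beta/2}$ in the temporal seminorm, and that factor, rather than any degeneration of short cylinders, is where $\tau_0>0$ enters. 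What this buys is that the cited theorem is used as a pure black box: its dependence on the domain is one fixed constant.

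Your route instead needs exactly the fact you flag as the main obstacle. You need the constants to depend on $Q_{T'}$ only through the $H_\gamma$ character of $\partial\Omega$ and a bounded height. The theorem as stated does not provide this, so ``every input is uniform, hence so are $\beta$ and $C$'' is asserted rather than derived.

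Of your two fallbacks:
\begin{itemize}
\item Restricting an estimate proved on $Q_T$ fails in the case the paper actually needs. Lemma~\ref{lem:continuation} uses $T=T_*$ for a solution defined only on $[0,T_*)$, about which nothing is known up to $t=T_*$.
\item Localization does work. The H\"older quotient for pairs at parabolic distance at least a fixed $r_0$ is at most $2Kr_0^{-\beta}$, so the size of the cover is irrelevant. In principle it would not even need $\tau_0$. However, it requires local versions of Lieberman's estimate (interior, lateral, and near the initial slice and corner), not Theorem~4.7 itself.
\end{itemize}

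If you want a proof that uses only the quoted theorem, the rescaling is the shorter path.
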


In the application below, the estimate depends only on the quantities
listed in Lemma~\ref{lem:quoted-gradient} and requires no prior H\"older
bound for $D^2z$.

\begin{lemma}[Uniform gradient H\"older estimate]\label{lem:gradient-holder}
Fix $0<\tau_0\le T$.  Let $T'\in[\tau_0,T]$, and let $u$ be a
Schauder-classical solution of \eqref{eq:main-pde} on $[0,T']$ with the fixed
initial datum $u^0$ from Assumption~\ref{ass:data}.  Assume
\[
 \sup_{0\le t\le T'}\|u(t)\|_{C^1(\overline\Omega)}\le M_1.
\]
Then there are $\beta\in(0,\alpha]$ and $C<\infty$, depending only on
$\tau_0$, $T$, $M_1$, the fixed initial and boundary data, and the
structural data, such that
\[
 \|\nabla u\|_{C^{\beta/2,\beta}([0,T']\times\overline\Omega)}\le C.
\]
The exponent and constant are independent of the particular
$T'\in[\tau_0,T]$.
\end{lemma}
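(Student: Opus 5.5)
The plan is to obtain Lemma~\ref{lem:gradient-holder} as a direct application of Lemma~\ref{lem:quoted-gradient}, the quoted Lieberman estimate with $\gamma=2$. The equation \eqref{eq:main-pde} is put in the quasilinear form used there, namely
\[
 z_t-A^{ij}(X,z,Dz)z_{ij}=F(X,z,Dz),\qquad A^{ij}(X,z,p)=a^{ij}(x,p),\qquad F(X,z,p)=\nabla G(x)\cdot p-\mu\cT_\eta(x,z),
\]
with $X=(t,x)$. The nondivergence identity \eqref{eq:nondivergence} gives this representation. I would then check the structural hypotheses one at a time, with constants depending only on $M_1$ and the structural data.

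\emph{Bound on $z$ and $Dz$.} Proposition~\ref{prop:range} gives $|u|\le1$, and the hypothesis gives $|\nabla u|\le M_1$, so $K=1+M_1$.

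\emph{Ellipticity.} Lemma~\ref{lem:ellipticity} gives $\lambda_K=\nupar$, uniformly in $p$.

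\emph{Structure constant $\mu_K$.} We have $A_z=0$ and $A_t=0$. The derivative $A_p$ is globally bounded by Lemma~\ref{lem:ellipticity}, using $\eps>0$. The derivative $A_x=\nabla G\,P_\eps(p)$ is bounded by $\norm{G}_{C^1}$. Finally, by Lemma~\ref{lem:reaction},
\[
 |F|\le\norm{\nabla G}_{L^\infty}M_1+\mu M_\eta .
\]
Since $\gamma=2$, the weight $(d^*)^{\gamma-2}$ equals $1$, so no distance weight enters, and $\mu_K$ depends only on $M_1$ and the structural data.

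\emph{Boundary data.} The parabolic boundary datum is $u^0$ on $\{0\}\times\overline\Omega$ and $0$ on the lateral boundary. It is compatible at the corner because $u^0=0$ on $\partial\Omega$. Its regularity is that of $u^0\in C^{2+\alpha}(\overline\Omega)$ extended constantly in time, which gives a $C^{2+\alpha}$-type parabolic boundary function. That is stronger than the $H_2$ (i.e.\ $C^{1+\beta}$-type) membership required. The domain $\partial Q_{T'}$ is a cylinder over a $C^{4+\alpha}$ boundary, so it belongs to $H_2$.

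The main obstacle is the uniformity of $\beta$ and $C$ in $T'\in[\tau_0,T]$. I would handle it by observing two things. First, every quantity fed into Lemma~\ref{lem:quoted-gradient} ($K$, $\lambda_K$, $\mu_K$, the $H_2$ norms of $\partial Q_{T'}$, and the boundary datum) is independent of $T'$. The cylinders $Q_{T'}$ differ only in height, which lies in $[\tau_0,T]$, so the geometric constants of the cylinder are uniformly controlled. Second, the final clause of Lemma~\ref{lem:quoted-gradient} then yields a common pair $(\beta,C)$. If the resulting $\beta$ exceeds $\alpha$, I replace it by $\min\{\beta,\alpha\}$. This only weakens the H\"older seminorm on the bounded set $\overline{Q}_{T'}$, at the cost of a constant depending on $T$ and $\operatorname{diam}\Omega$.

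One point needs care: that the solution is ``already existing classical'' in the sense required near $t=0$. The Schauder-classical hypothesis $u\in C^{1+\beta/2,2+\beta}([0,T']\times\overline\Omega)$ supplies this up to the initial time. This is where the dependence on $\tau_0$ enters, through the lower bound on the cylinder height, while the constants remain independent of the particular $T'$.
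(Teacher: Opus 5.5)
Your proposal is correct and follows essentially the paper's argument: both verify the hypotheses of Lemma~\ref{lem:quoted-gradient} with $\gamma=2$, $K=1+M_1$, $\lambda_K=\nupar$, a structure constant built from the global bounds on $D_pa^{ij}$, $D_xa^{ij}$ and $|F|\le C(1+M_1)$, and a time-independent extension of $u^0$ as the $H_2$ boundary function. The only difference is how uniformity in $T'$ is obtained. The paper makes it explicit by rescaling $s=t/T'$ to the fixed cylinder $(0,1)\times\Omega$, with ellipticity $T'\nupar\ge\tau_0\nupar$ and a factor $(T')^{-\beta/2}\le\tau_0^{-\beta/2}$ on returning to $t$. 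You instead invoke the uniformity clause of the quoted lemma directly, and your explicit replacement of $\beta$ by $\min\{\beta,\alpha\}$ supplies a detail the paper leaves implicit.
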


\begin{proof}
We verify the hypotheses of Lemma~\ref{lem:quoted-gradient} for
\eqref{eq:main-pde}. Write
\[
u_t-a^{ij}(x,\nabla u)u_{ij}=B(x,u,\nabla u),
\qquad
B(x,r,p)=\nabla G(x)\cdot p-\mu\Lambda(x)H_\eta(r-q).
\]
The range estimate and the assumed $C^1$ bound give
\[
|u|+|\nabla u|\leq 1+M_1.
\]
Lemma~\ref{lem:ellipticity} gives the common ellipticity bounds
\[
\nupar|\xi|^2
\leq
a^{ij}(x,p)\xi_i\xi_j
\leq
(\nupar+g^*)|\xi|^2.
\]

On the range
\[
|r|\leq 1,
\qquad
|p|\leq M_1,
\]
all coefficient quantities required by conditions (4.17) and (4.25) of
\cite{Lieberman1986} are bounded by constants depending only on $M_1$
and the structural data. Indeed, $a^{ij}$ is independent of $t$ and $r$,
smooth in $p$, and has the required bounded derivatives with respect to
$x$ and $p$ on this range. The lower-order term satisfies
\[
|B(x,r,p)|\leq C(1+|p|),
\]
and its required derivatives are bounded. In particular,
\[
B_p=\nabla G,
\qquad
B_{pp}=B_{rp}=0,
\]
while $B_r$, $B_{rr}$, and $D_xB$ are controlled by the fixed norms of
$G$, $\Lambda$, and $H_\eta$.

We apply the cited theorem with the admissible endpoint exponent
$\gamma=2$. In Lieberman's notation, the principal coefficients are
\[
a_{\mathrm L}^{ij}(X,z,p)=a^{ij}(x,p),
\]
and the lower-order coefficient is
\[
a_{\mathrm L}(X,z,p)=B(x,z,p).
\]
Thus,
\[
\partial_z a_{\mathrm L}^{ij}
=
\partial_t a_{\mathrm L}^{ij}
=
0.
\]
In condition (4.25), the factor $(d^*)^{\gamma-2}$ is equal to one, and
\[
|\partial_z a_{\mathrm L}^{ij}|
+
|D_pa_{\mathrm L}^{ij}|
+
|D_xa_{\mathrm L}^{ij}|
+
|\partial_ta_{\mathrm L}^{ij}|
+
|a_{\mathrm L}|
\leq C(M_1)
\]
on the bounded range determined by
\[
|u|\leq 1,
\qquad
|\nabla u|\leq M_1.
\]

We next verify that the prescribed boundary data satisfy the required
regularity with a bound that is uniform for $T'\in[\tau_0,T]$. Rescale
time by
\[
s=\frac{t}{T'}
\]
so that the problem is transferred to the fixed cylinder
$(0,1)\times\Omega$. Since $\Omega$ has $C^{4+\alpha}$ boundary, there
exist an open neighborhood $U\supset\overline{\Omega}$ and a bounded
linear extension operator
\[
E:
C^{2+\alpha}(\overline{\Omega})
\longrightarrow
C^{2+\alpha}(U).
\]
Choose an open interval $J\supset[0,1]$ and a function
$\widetilde{\zeta}\in C^\infty(J)$ satisfying
\[
\widetilde{\zeta}(0)=1,
\]
and define
\[
\widetilde{\Phi}(s,x)
=
\widetilde{\zeta}(s)\,Eu^0(x),
\qquad
(s,x)\in J\times U.
\]
Because $u^0=0$ on $\partial\Omega$, the restriction of
$\widetilde{\Phi}$ to the parabolic boundary of
$(0,1)\times\Omega$ coincides with the rescaled boundary data: it equals
$u^0$ on $\{0\}\times\overline{\Omega}$ and vanishes on
$[0,1]\times\partial\Omega$. In the notation of
\cite[p.~350]{Lieberman1986}, the space $H_2(\partial_pQ)$ consists of
traces of $H_2$ functions defined in a neighborhood of the closed
cylinder. Therefore,
\[
\|\varphi\|_{H_2(\partial_pQ)}
\leq
\|\widetilde{\Phi}\|_{H_2(J\times U)}
\leq
C\|u^0\|_{C^{2+\alpha}(\overline{\Omega})},
\]
where $C$ depends only on the fixed extension operator and
$\widetilde{\zeta}$ and is uniform for $T'\in[\tau_0,T]$.

Under the same rescaling, the principal matrix and the lower-order term
become $T'a^{ij}$ and $T'B$. Since $T'\in[\tau_0,T]$, the corresponding
ellipticity and structural constants are bounded uniformly above and
below. Lemma~\ref{lem:quoted-gradient} therefore gives, on the fixed
cylinder,
\[
[\nabla\widetilde{u}]_{C_s^{\beta/2}C_x^0}
+
[\nabla\widetilde{u}]_{C_x^\beta C_s^0}
\leq C,
\qquad
\widetilde{u}(s,x)=u(T's,x).
\]
Returning to the original time variable gives
\[
[\nabla u]_{C_t^{\beta/2}C_x^0
([0,T']\times\overline{\Omega})}
\leq
(T')^{-\beta/2}C
\leq
\tau_0^{-\beta/2}C,
\]
while the spatial H\"older seminorm is unchanged. Combining these
estimates with
\[
\|\nabla u\|_{L^\infty([0,T']\times\Omega)}
\leq M_1
\]
yields
\[
\|\nabla u\|_{C^{\beta/2,\beta}
([0,T']\times\overline{\Omega})}
\leq C
\]
with the same exponent and constant for every
$T'\in[\tau_0,T]$.
\end{proof}

\begin{lemma}[Uniform positive-time Schauder bound on finite time intervals]\label{lem:schauder}
Fix $T>0$. Let $0<T'\leq T$, and let $u$ be a
Schauder-classical solution of \eqref{eq:main-pde} on $[0,T']$ satisfying
\[
 \sup_{0\le t\le T'}\|u(t)\|_{C^1(\overline\Omega)}\le M_1.
\]
For every $0<\tau<T'$ there are $\beta\in(0,\alpha]$ and $C<\infty$ such that
\[
 \|u\|_{C^{1+\beta/2,2+\beta}([\tau,T']\times\overline\Omega)}\le C.
\]
For fixed $T$, $\tau$, and $M_1$, the exponent and constant are independent
of $T'\in[\tau,T]$.
\end{lemma}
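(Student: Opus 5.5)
Lemma~\ref{lem:gradient-holder} already gives a gradient H\"older bound $\|\nabla u\|_{C^{\beta_1/2,\beta_1}([0,T']\times\overline\Omega)}\le C_1$, with $\beta_1\in(0,\alpha]$ and $C_1$ depending only on $\tau$ (in the role of $\tau_0$), $T$, $M_1$ and the structural data, uniformly for $T'\in[\tau,T]$. We therefore freeze the gradient in the coefficients and treat \eqref{eq:main-pde} as a \emph{linear} Dirichlet problem:
\[
u_t-\hat a^{ij}(t,x)u_{ij}=\hat f(t,x),\qquad
\hat a^{ij}(t,x)=a^{ij}(x,\nabla u(t,x)),\qquad
\hat f(t,x)=\nabla G(x)\cdot\nabla u(t,x)-\mu\cT_\eta(x,u(t,x)).
\]
The plan is to show that $\hat a^{ij}$ and $\hat f$ lie in $C^{\beta/2,\beta}$ with bounds independent of $T'$, and then apply the linear parabolic Schauder estimate.

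\textbf{Coefficient bounds.} Lemma~\ref{lem:ellipticity} says $a^{ij}$ is Lipschitz in $x$ (via $\|G\|_{C^1}$) and globally Lipschitz in $p$. Composing with $\nabla u\in C^{\beta_1/2,\beta_1}$ gives $\|\hat a^{ij}\|_{C^{\beta_1/2,\beta_1}}\le C(1+C_1)$. The ellipticity constants $\nupar$ and $\nupar+g^*$ do not depend on $p$. For $\hat f$, the drift $\nabla G\cdot\nabla u$ is a product of $C^{1+\alpha}$ and $C^{\beta_1/2,\beta_1}$ functions. For the reaction term, $u$ is Lipschitz in $x$ by the $C^1$ bound. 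It is also H\"older in $t$: either interpolate between the $C^1$ bound and the H\"older modulus of $\nabla u$, or note that the boundary data are trivial and use the standard fact that a solution with bounded gradient is $C^{1/2}$ in time, available from the same Lieberman estimate. Then \eqref{eq:T-composition}, or directly the Lipschitz bound \eqref{eq:T-lip} together with $\Lambda\in C^{1+\alpha}$, gives $\|\cT_\eta(\cdot,u)\|_{C^{\beta_1/2,\beta_1}}\le C$.

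\textbf{Schauder step.} Set $\beta=\beta_1$. On $[\tau,T']$ the initial slice is not controlled in $C^{2+\beta}$ by anything uniform, so we cut off in time. Take $\zeta\in C^\infty(\R)$ with $\zeta=0$ on $[0,\tau/2]$, $\zeta=1$ on $[\tau,\infty)$ and $|\zeta'|\le 4/\tau$, and set $v=\zeta u$. Then
\[
v_t-\hat a^{ij}v_{ij}=\zeta\hat f+\zeta' u,
\]
with $v=0$ on the lateral boundary and $v(\tau/2)=0$; in particular all compatibility conditions at $t=\tau/2$ hold trivially. The right-hand side lies in $C^{\beta/2,\beta}$ with a uniform bound, since $\zeta'u$ is controlled by the time-H\"older bound on $u$ above. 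The global linear Schauder estimate for the Dirichlet problem on $[\tau/2,T']\times\Omega$ (LSU Ch.~IV, Friedman Ch.~3), valid since $\partial\Omega\in C^{4+\alpha}\subset C^{2+\beta}$, gives
\[
\|v\|_{C^{1+\beta/2,2+\beta}}\le C\bigl(\|\zeta\hat f+\zeta'u\|_{C^{\beta/2,\beta}}+\|v\|_{L^\infty}\bigr).
\]
Restricting to $[\tau,T']$, where $v=u$, yields the claimed bound.

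\textbf{Main obstacle: uniformity in $T'$.} The Schauder constant depends on the length of the time interval, which varies in $[\tau/2,T]$. To handle this I would extend $v$ by zero for $t<\tau/2$, or rescale time as in the proof of Lemma~\ref{lem:gradient-holder}. Alternatively, extend the frozen coefficients in time by constants beyond $T'$ and solve on the fixed interval $[0,T]$; uniqueness then identifies the solution with $v$ on $[0,T']$. Either route makes the constant depend only on $T$, $\tau$, $M_1$ and the structural data. A second point to check carefully is the time-H\"older continuity of $u$ itself, needed for the reaction term and for $\zeta'u$; I expect this to follow from the global gradient H\"older bound, possibly at the cost of shrinking the exponent $\beta$.
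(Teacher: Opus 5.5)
Your proposal is correct and takes essentially the paper's route: the uniform gradient H\"older bound from Lemma~\ref{lem:gradient-holder}, H\"older control of $a^{ij}(x,\nabla u)$, the drift and the reaction, a time cutoff that yields a linear Dirichlet problem with zero data (so compatibility is automatic), the global linear Schauder estimate, and a time rescaling to a fixed cylinder for uniformity in $T'$. For the time-H\"older continuity of $u$ itself, the paper integrates $\nabla u(t)-\nabla u(s)$ along a path in $\overline\Omega$ of uniformly bounded length from $x$ to $\partial\Omega$, where $u(t)=u(s)=0$. That Dirichlet anchor is what your ``interpolation'' option lacks, since bounds on $\nabla u$ alone cannot control the spatially constant part of $u(t)-u(s)$; your alternative via the standard $C^{1/2}$-in-time lemma for solutions with bounded gradient also works.
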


\begin{proof}
Apply Lemma~\ref{lem:gradient-holder} with $\tau_0=\tau$ and the fixed
time $T$. The lemma gives the same exponent and constant for every
$T'\in[\tau,T]$.

Since $\Omega$ is a bounded connected $C^1$ domain, there exists
$C_\Omega<\infty$ such that every $x,y\in\overline{\Omega}$ can be joined
inside $\overline{\Omega}$ by a piecewise $C^1$ path
$\gamma_{x,y}$ whose length is at most $C_\Omega|x-y|$. Hence, for each
fixed $t$,
\[
|u(t,x)-u(t,y)|
\leq
\int_{\gamma_{x,y}}|\nabla u(t,\xi)|\,d\ell
\leq
C_\Omega
\|\nabla u(t)\|_{L^\infty(\Omega)}
|x-y|.
\]

There also exists $L_\Omega<\infty$ such that every
$x\in\overline{\Omega}$ can be joined to a point of $\partial\Omega$ by
a piecewise $C^1$ path $\gamma_x\subset\overline{\Omega}$ of length at
most $L_\Omega$. Since
\[
u(t,\xi)=u(s,\xi)=0
\qquad
\text{for }\xi\in\partial\Omega,
\]
we obtain
\[
\begin{aligned}
|u(t,x)-u(s,x)|
&=
\left|
\int_{\gamma_x}
\bigl(\nabla u(t,\xi)-\nabla u(s,\xi)\bigr)\cdot d\xi
\right| \\
&\leq
L_\Omega
[\nabla u]_{C_t^{\beta/2}C_x^0}
|t-s|^{\beta/2}.
\end{aligned}
\]
Together with the uniform $L^\infty$ bound for $u$, these estimates give
\[
\|u\|_{C^{\beta/2,\beta}
([0,T']\times\overline{\Omega})}
\leq C.
\]

After decreasing $\beta$, if necessary, the coefficients
$a^{ij}(x,\nabla u)$, the drift term
$(\nabla G)\cdot\nabla u$, and the reaction
$\cT_\eta(x,u)$ have uniformly bounded
$C^{\beta/2,\beta}$ norms.

Choose $\chi\in C^\infty([0,T])$ such that
\[
\chi=0
\quad\text{on }[0,\tau/3],
\qquad
\chi=1
\quad\text{on }[\tau/2,T],
\qquad
|\chi'|\leq \frac{C}{\tau},
\]
and define
\[
v=\chi u
\qquad\text{on }[0,T'].
\]
On $[\tau/4,T']\times\Omega$, the function $v$ satisfies
\[
\begin{cases}
v_t-a^{ij}(x,\nabla u)v_{ij}-(\nabla G)\cdot\nabla v
=
-\mu\chi\cT_\eta(x,u)+\chi'u,
\\[2mm]
v=0
\quad\text{on }[\tau/4,T']\times\partial\Omega,
\\[1mm]
v(\tau/4,\cdot)=0.
\end{cases}
\]
Because $\chi=\chi'=0$ in a neighborhood of $t=\tau/4$, both $v$ and
the right-hand side vanish there. Hence the initial and boundary data are
compatible with the linear equation at
$\{\tau/4\}\times\partial\Omega$.

Set
\[
L_{T'}=T'-\frac{\tau}{4},
\qquad
s=\frac{t-\tau/4}{L_{T'}},
\qquad
\widetilde{v}(s,x)
=
v\left(\frac{\tau}{4}+L_{T'}s,x\right).
\]
For every $T'\in[\tau,T]$,
\[
\frac{3\tau}{4}
\leq
L_{T'}
\leq
T-\frac{\tau}{4}.
\]

The rescaled equation is posed on the fixed cylinder
$[0,1]\times\Omega$. It has zero initial and boundary data, principal
matrix $L_{T'}a^{ij}$, drift $L_{T'}\nabla G$, and right-hand side
multiplied by $L_{T'}$. The preceding bounds for $L_{T'}$, together with
the uniform H\"older bounds for the coefficients and the right-hand side,
give uniform ellipticity and uniformly controlled Schauder data on the
fixed cylinder. The global linear Dirichlet Schauder estimate on the fixed
cylinder therefore gives
\[
\|\widetilde{v}\|_{C^{1+\beta/2,\,2+\beta}
([0,1]\times\overline{\Omega})}
\leq C
\]
with the same constant for every $T'\in[\tau,T]$.

Returning to the original variables and using the upper and lower bounds
for $L_{T'}$ gives
\[
\|v\|_{C^{1+\beta/2,\,2+\beta}
([\tau/4,T']\times\overline{\Omega})}
\leq C.
\]
Since $v=u$ on $[\tau/2,T']$ and
\[
[\tau,T']\subset[\tau/2,T'],
\]
we conclude that
\[
\|u\|_{C^{1+\beta/2,\,2+\beta}
([\tau,T']\times\overline{\Omega})}
\leq C.
\]
\end{proof}

\begin{lemma}[Patching parabolic H\"older estimates]
\label{lem:holder-patching}
Let $0<a<b<c$, and suppose that
\[
w\in
C^{1+\theta_1/2,\,2+\theta_1}
([0,b]\times\overline{\Omega})
\cap
C^{1+\theta_2/2,\,2+\theta_2}
([a,c]\times\overline{\Omega})
\]
for some $\theta_1,\theta_2\in(0,1)$. Then, for every
\[
0<\theta<\min\{\theta_1,\theta_2\},
\]
one has
\[
w\in
C^{1+\theta/2,\,2+\theta}
([0,c]\times\overline{\Omega}).
\]
Moreover, the corresponding norm is bounded by a constant depending only
on the two given H\"older norms, $a$, $b$, $c$, and $\theta$.
\end{lemma}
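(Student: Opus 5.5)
The plan is to verify the definition of the space $C^{1+\theta/2,2+\theta}([0,c]\times\overline\Omega)$ directly. Membership means four things: $w$, $w_t$, $D_xw$ and $D_x^2w$ are continuous and bounded on $[0,c]\times\overline\Omega$; $w_t$ and $D_x^2w$ have finite $(\theta/2,\theta)$ Hölder seminorms; $D_xw$ has temporal exponent $(1+\theta)/2$; and $D_xw$ has spatial $C^{1+\theta}$ regularity (Remark~\ref{rem:holder-convention}). Continuity and sup bounds are immediate, because $[0,b]$ and $[a,c]$ cover $[0,c]$ and the two pieces agree on the overlap $[a,b]$. Since $\theta\le\theta_i$ and all time and space differences are bounded by $c$ and $\operatorname{diam}\Omega$, the estimate $|t-s|^{\theta_i/2}\le c^{(\theta_i-\theta)/2}|t-s|^{\theta/2}$ (and its spatial analogue) shows that $w$ lies in $C^{1+\theta/2,2+\theta}$ on each of the two slabs separately. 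The norm there is controlled by the given norms, $c$, $\operatorname{diam}\Omega$ and $\theta$.

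The only nontrivial step is the Hölder quotient for a pair of points $(t,x)$, $(s,y)$ lying in different slabs, say $t\in[0,a)$ and $s\in(b,c]$. Then $|t-s|\ge b-a>0$, so the parabolic distance is bounded below. For any quantity $f\in\{w_t,D_x^2w\}$ we use the crude bound
\[
|f(t,x)-f(s,y)|\le 2\sup|f|\le 2\sup|f|\,(b-a)^{-\theta}\bigl(|t-s|^{1/2}+|x-y|\bigr)^{\theta}.
\]
The sup is controlled by the two given norms. The same device, with exponent $(1+\theta)/2$ and the lower bound $(b-a)^{(1+\theta)/2}\le|t-s|^{(1+\theta)/2}$, handles the temporal seminorm of $D_xw$. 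Purely spatial seminorms at fixed $t$ involve only one slab, so they need nothing new. This is where the dependence on $a$ and $b$ enters.

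I expect no genuine obstacle. The only point needing care is the temporal Hölder condition for $D_xw$, whose exponent $(1+\theta)/2$ exceeds $1/2$. Under the convention of Remark~\ref{rem:holder-convention} this is just another Hölder seminorm, so the separation argument above covers it. An alternative, if a norm convention involving $w$ itself with exponent $1+\theta/2$ in time were used, is a triangle inequality through an intermediate time $r\in[a,b]$. Because $|t-s|\ge b-a$, the separation bound is simpler, and I would use it.

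The strict inequality $\theta<\min\{\theta_1,\theta_2\}$ is not really needed; the case $\theta\le\min$ works equally well, and the strict inequality is harmless. Collecting the three cases (both points in $[0,b]$, both in $[a,c]$, separated) gives the norm bound with a constant depending only on the two given norms, $a$, $b$, $c$, $\theta$, and the fixed domain.
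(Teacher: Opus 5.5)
Your proposal is correct and follows essentially the paper's route: lower the exponents on each slab, then control pairs with $t<a<b<s$ by the supremum norms and the separation $|t-s|\ge b-a$. The paper also runs a triangle inequality through an intermediate time $r\in[a,b]$, which you correctly treat as unnecessary, since any pair not separated by $b-a$ already lies in a single slab. One small slip: in your displayed crude bound the factor should be $(b-a)^{-\theta/2}$, because $(|t-s|^{1/2}+|x-y|)^{\theta}\ge(b-a)^{\theta/2}$; with $(b-a)^{-\theta}$ the second inequality can fail when $b-a>1$.
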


\begin{proof}
On each of the cylinders
\[
[0,b]\times\overline{\Omega}
\qquad\text{and}\qquad
[a,c]\times\overline{\Omega},
\]
the required estimates follow by lowering the H\"older exponent from
$\theta_1$ or $\theta_2$ to $\theta$. It remains to estimate pairs of
points whose time coordinates lie in different parts of the two
cylinders.

Let $(t,x),(s,y)\in[0,c]\times\overline{\Omega}$, with
$t<a$ and $s>b$. Since
\[
|t-s|\geq b-a,
\]
the corresponding temporal H\"older quotients are bounded by the relevant
supremum norms divided by a fixed positive power of $b-a$.

Now suppose that the two time coordinates are not separated by $b-a$.
Choose $r\in[a,b]$ between $t$ and $s$. For any function $f$ appearing
in the parabolic H\"older norm, the triangle inequality gives
\[
|f(t,x)-f(s,y)|
\leq
|f(t,x)-f(r,x)|
+
|f(r,x)-f(r,y)|
+
|f(r,y)-f(s,y)|.
\]
The first and third terms are controlled by the temporal H\"older
estimates on the two cylinders, while the middle term is controlled by
the spatial H\"older estimate on their overlap.

Apply this argument to $D_xw$ with temporal exponent
$(1+\theta)/2$, and to $D_x^2w$ and $w_t$ with temporal exponent
$\theta/2$. The corresponding spatial estimates follow in the same way
after lowering the spatial exponent to $\theta$. These are precisely the
seminorms defining
\[
C^{1+\theta/2,\,2+\theta}
([0,c]\times\overline{\Omega}),
\]
which proves the result.
\end{proof}

\begin{lemma}[Patching across a single time interface]
\label{lem:interface-patching}
Let $0<t_0<c$, and suppose that
\[
w\in
C^{1+\theta_-/2,\,2+\theta_-}
([0,t_0]\times\overline{\Omega})
\]
and
\[
w\in
C^{1+\theta_+/2,\,2+\theta_+}
([t_0,c]\times\overline{\Omega}),
\]
where $\theta_-,\theta_+\in(0,1)$. Assume that the values of
$w,\ D_xw,\ D_x^2w,\ w_t$
obtained from the two time intervals agree at $t=t_0$. Then, for every
$0<\theta<\min\{\theta_-,\theta_+\}$,
one has
\[
w\in
C^{1+\theta/2,\,2+\theta}
([0,c]\times\overline{\Omega}).
\]
\end{lemma}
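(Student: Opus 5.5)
The plan is to verify directly that each quantity in the $C^{1+\theta/2,2+\theta}([0,c]\times\overline\Omega)$ norm is controlled. Let $\theta_0=\min\{\theta_-,\theta_+\}$ and fix $\theta<\theta_0$. By the matching hypothesis at $t=t_0$, the functions $w$, $D_xw$, $D_x^2w$ and $w_t$ are well defined on all of $[0,c]\times\overline\Omega$. They are also continuous, since they are continuous on each closed piece and the two one-sided definitions agree on the common slice. The supremum norms are therefore bounded by the maximum of the two given norms.

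For the spatial H\"older seminorms there is nothing to patch. Each time slice lies entirely in one of the two cylinders. So $[D_x^2w(t,\cdot)]_{C^\theta}$, $[w_t(t,\cdot)]_{C^\theta}$ and the spatial $C^{1+\theta}$ control of $D_xw$ follow by lowering the exponent from $\theta_\pm$ to $\theta$. This uses the boundedness of $\Omega$: for $|x-y|\le \operatorname{diam}\Omega$ one has $|x-y|^{\theta_\pm}\le C|x-y|^{\theta}$.

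The temporal seminorms are the only real step. Take $f\in\{D_x^2w,\,w_t\}$ with exponent $\theta/2$, or $f=D_xw$ with exponent $(1+\theta)/2$. If $s<t$ lie on the same side of $t_0$, the one-sided estimate applies directly after lowering the exponent; this is possible because $|t-s|\le c$ is bounded. If $s<t_0<t$, I will use the triangle inequality through the interface point:
\[
|f(t,x)-f(s,x)|\le |f(t,x)-f(t_0,x)|+|f(t_0,x)-f(s,x)|\le C\bigl(|t-t_0|^{\kappa}+|t_0-s|^{\kappa}\bigr)\le 2C|t-s|^{\kappa}.
\]
Here $\kappa$ is the relevant (lowered) exponent, the first term is estimated on $[t_0,c]$, and the second on $[0,t_0]$. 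Both make sense because $f(t_0,x)$ is the same value from either side. The final inequality uses $|t-t_0|,|t_0-s|\le|t-s|$. Mixed space-time differences, if an equivalent norm uses them, are handled the same way: split $|f(t,x)-f(s,y)|\le|f(t,x)-f(s,x)|+|f(s,x)-f(s,y)|$ and apply the two previous cases.

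The only place that needs care, and the one I would check most closely, is the temporal control of $D_xw$ with exponent $(1+\theta)/2$. This requires that $D_xw$ itself (not merely $w$) matches across $t_0$, which the hypothesis supplies. It is also where the strict inequality $\theta<\theta_0$ is convenient: it absorbs the constants coming from the exponent-lowering factors $c^{(\theta_\pm-\theta)/2}$. Collecting the bounds gives membership in $C^{1+\theta/2,2+\theta}([0,c]\times\overline\Omega)$, with norm bounded by a constant depending on the two given norms, $t_0$, $c$, $\operatorname{diam}\Omega$ and $\theta$. This is the same structure as in Lemma~\ref{lem:holder-patching}, but with the overlap reduced to a single slice.
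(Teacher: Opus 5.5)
Your proof is correct and follows essentially the same route as the paper: you lower the exponents on each closed piece, bound time pairs straddling $t_0$ by the triangle inequality through the common interface value together with $|t-t_0|,|t_0-s|\le|t-s|$, and split mixed differences through $(s,x)$. Two small corrections to your side remarks: the strict inequality $\theta<\min\{\theta_-,\theta_+\}$ is not needed to control the factors $c^{(\theta_\pm-\theta)/2}$, which are finite also for $\theta=\theta_\pm$. Also, for a single function $w$ the agreement of $D_xw$ and $D_x^2w$ at $t_0$ is automatic, so the only substantive interface hypothesis is the agreement of the one-sided time derivatives $w_t$.
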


\begin{proof}
Since
\[
\theta<\min\{\theta_-,\theta_+\},
\]
the assumed regularity gives
\[
w\in C^{1+\theta/2,\,2+\theta}
([0,t_0]\times\overline{\Omega})
\]
and
\[
w\in C^{1+\theta/2,\,2+\theta}
([t_0,c]\times\overline{\Omega}).
\]
It remains to estimate pairs of points whose time coordinates lie on opposite sides of $t_0$. Let $s<t_0<t$. At a fixed spatial point $x$, the agreement of the corresponding values at $t=t_0$ gives
\[
\begin{aligned}
|D_xw(t,x)-D_xw(s,x)|
&\leq |D_xw(t,x)-D_xw(t_0,x)| \\
&\quad+|D_xw(t_0,x)-D_xw(s,x)| \\
&\leq C\left(|t-t_0|^{(1+\theta)/2}+|t_0-s|^{(1+\theta)/2}\right) \\
&\leq C|t-s|^{(1+\theta)/2}.
\end{aligned}
\]
Similarly,
\[
|D_x^2w(t,x)-D_x^2w(s,x)|\leq C|t-s|^{\theta/2}
\]
and
\[
|w_t(t,x)-w_t(s,x)|\leq C|t-s|^{\theta/2}.
\]
The corresponding estimate for $w$ follows from its temporal H\"older estimates on the two time intervals and the agreement of its values at $t=t_0$.

For general points $(t,x)$ and $(s,y)$ with $s<t_0<t$, the boundedness of $D_x^2w$ gives
\[
\begin{aligned}
|D_xw(t,x)-D_xw(s,y)|
&\leq |D_xw(t,x)-D_xw(s,x)|+|D_xw(s,x)-D_xw(s,y)| \\
&\leq C\left(|t-s|^{(1+\theta)/2}+|x-y|\right).
\end{aligned}
\]
The spatial $C^{1+\theta}$ regularity of $D_xw$ is equivalently controlled by the boundedness of $D_x^2w$ and the spatial $C^\theta$ seminorm of $D_x^2w$, both already available on the two time intervals. The same decomposition gives
\[
|D_x^2w(t,x)-D_x^2w(s,y)|
\leq C\left(|t-s|^{\theta/2}+|x-y|^\theta\right)
\]
and
\[
|w_t(t,x)-w_t(s,y)|
\leq C\left(|t-s|^{\theta/2}+|x-y|^\theta\right).
\]
Together with the corresponding bounds for $w$, these estimates control every seminorm in
\[
C^{1+\theta/2,\,2+\theta}([0,c]\times\overline{\Omega}),
\]
which proves the result.
\end{proof}

\begin{lemma}[Continuation criterion]\label{lem:continuation}
Let $u$ be a locally H\"older-classical solution of \eqref{eq:main-pde} on $[0,T_*)$, where $T_*<\infty$.  If
\[
        \sup_{0\le t<T_*}\norm{u(t)}_{C^1(\overline\Omega)}<\infty,
\]
then $u$ extends as a positive-time classical solution beyond $T_*$.
\end{lemma}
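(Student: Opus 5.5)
The plan is to convert the assumed $C^1$ bound into a uniform Schauder bound on $[\tau,T_*)$, pass to the limit as $t\uparrow T_*$, and restart the solution at a time $t_1<T_*$ using the local existence theory. Let
\[
M_1=\sup_{0\le t<T_*}\norm{u(t)}_{C^1(\overline\Omega)}<\infty .
\]
By Proposition~\ref{prop:range} we have $0\le u\le 1$. Fix $\tau\in(0,T_*/2)$ and choose $T>T_*$ as the fixed upper time in Lemma~\ref{lem:schauder}.

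\textbf{Step 1: uniform Schauder bound.} For every $T'\in[\tau,T_*)$, the restriction of $u$ to $[\tau/2,T']$ is H\"older-classical. If Lemma~\ref{lem:schauder} requires Schauder regularity from $t=0$, I would instead apply it to the shifted solution starting at $\tau/2$, whose initial datum $u(\tau/2)\in C^{2+\beta}$ is compatible because the equation holds up to the boundary at positive times. Lemma~\ref{lem:schauder} then gives $\beta\in(0,\alpha]$ and $C$, both independent of $T'$, such that
\[
\norm{u}_{C^{1+\beta/2,2+\beta}([\tau,T']\times\overline\Omega)}\le C .
\]
Letting $T'\uparrow T_*$, the functions $u$, $u_t$, $D_xu$ and $D_x^2u$ are uniformly H\"older continuous on $[\tau,T_*)\times\overline\Omega$. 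Hence they extend continuously to $t=T_*$, and the extension satisfies the same bound on $[\tau,T_*]$. Passing to the limit in the pointwise equation shows that the extended $u$ solves \eqref{eq:main-pde} on $[\tau,T_*]$, vanishes on the lateral boundary, and has $u(T_*)\in C^{2+\beta}(\overline\Omega)$.

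\textbf{Step 2: restart and glue.} I restart at $t_1=T_*-\delta$ rather than at $T_*$ itself; the choice of $\delta$ comes below. The datum $w^0=u(t_1)$ lies in $C^{2+\beta}$, vanishes on $\partial\Omega$, and is compatible: evaluating the equation on the lateral boundary, where $u_t=0$, gives $\cK_{\eps,\nupar}[w^0]-\mu\cT_\eta(x,w^0)=0$ on $\partial\Omega$. Lemma~\ref{lem:local} with $\gamma=\beta$ then yields a Schauder solution $w$ on $[t_1,t_1+S_\beta]$. The point requiring care is that $S_\beta$ must not shrink to zero as $t_1\to T_*$. This holds because the local existence time in Proposition~\ref{prop:external-local} depends only on the $C^{2+\beta}$ norm of the datum and the structural data, and that norm is uniformly bounded by Step~1. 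So I fix $S_\beta$ uniformly and choose $\delta<S_\beta$, which gives $t_1+S_\beta>T_*$.

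By Lemma~\ref{lem:stability-local} (uniqueness), $w=u$ on the common interval $[t_1,T_*)$. Define the extended solution to be $u$ on $[0,t_1]$ and $w$ on $[t_1,t_1+S_\beta]$. At $t=t_1$ the values of $u$, $D_xu$, $D_x^2u$ and $u_t$ agree from both sides, since both pieces are classical solutions of the same equation and coincide on a neighbourhood in time. Lemma~\ref{lem:interface-patching} (or Lemma~\ref{lem:holder-patching}) then shows that the glued function is H\"older-classical on $[\tau,t_1+S_\beta]$. Together with the original solution on $[0,\tau]$, it is a positive-time classical solution on $[0,t_1+S_\beta]$, and $t_1+S_\beta>T_*$.

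\textbf{Main obstacle.} The hardest step is the uniformity of the local existence time with respect to the $C^{2+\beta}$ norm of the datum. Proposition~\ref{prop:external-local} is cited only in qualitative form. I would justify the uniformity by inspecting the contraction argument: the radius of the ball and the time horizon in the fixed-point construction depend only on the norm of the datum, the ellipticity constant $\nupar$, and the coefficient bounds on the bounded gradient range. An alternative is to restart exactly at $T_*$, which needs no uniformity; for that one checks that $u(T_*)$ satisfies the compatibility condition, which follows from the continuity of all terms of the equation up to $t=T_*$ on $\partial\Omega$.
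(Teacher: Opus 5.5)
Your argument is correct. Your fallback is exactly the paper's proof; your primary route differs in where you restart.

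Both proofs begin with the bound of Lemma~\ref{lem:schauder} on $[\tau,T']$, uniform in $T'<T_*$. You pass to $t=T_*$ by extending the uniformly H\"older functions $u,u_t,D_xu,D_x^2u$ to the closure, which keeps the exponent $\beta$. The paper instead shows that $u(t)$ is Cauchy in $C^{2+\beta'}(\overline\Omega)$ by interpolation. It then re-derives the temporal H\"older bounds up to $T_*$ with a smaller exponent $\theta_-<\beta'$.

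The paper restarts exactly at $T_*$:
\begin{itemize}
\item It gets compatibility of $u^{T_*}$ by letting $t\uparrow T_*$ in the boundary identity $\cK_{\eps,\nupar}[u(t)]-\mu\cT_\eta(x,u(t))=0$.
\item It applies Lemma~\ref{lem:local} at $T_*$, where a purely qualitative existence time suffices.
\item It glues via Lemma~\ref{lem:interface-patching}, using the equation to match $u_t$ from both sides of $T_*$.
\end{itemize}

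Your restart at $t_1=T_*-\delta$ makes compatibility automatic. It also makes the gluing trivial, since the two pieces coincide on $[t_1,T_*)$ by Lemma~\ref{lem:stability-local}. The price is a lower bound on the local existence time in terms of $\norm{w^0}_{C^{2+\beta}(\overline\Omega)}$ and the structural data. Neither Proposition~\ref{prop:external-local} nor Lemma~\ref{lem:local} provides this as stated.

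That uniformity is standard for quasilinear problems: in the fixed-point construction, the ball radius and time horizon depend only on this norm, on $\nupar$, and on the coefficient bounds. Still, it is an extra input you would have to prove or cite. The paper's route needs only the qualitative local theorem, at the cost of the limit and interface-matching work at $T_*$.
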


\begin{proof}
Fix $\tau\in(0,T_*)$. For each $T'\in[\tau,T_*)$, apply
Lemma~\ref{lem:schauder} to the solution on $[0,T']$, with $T=T_*$ and
$\tau_0=\tau$ in Lemma~\ref{lem:gradient-holder}. The assumed finite
$C^1$ bound gives
$|u|+|\nabla u|\leq K
\qquad\text{on }[0,T')\times\overline{\Omega}
$
with the same $K$ for every $T'<T_*$. Consequently, there exist
$\beta\in(0,\alpha]$ and $C<\infty$, independent of $T'<T_*$, such that
\[
\|u\|_{C^{1+\beta/2,\,2+\beta}
([\tau,T']\times\overline{\Omega})}
\leq C.
\]
Therefore,
\[
\sup_{T'<T_*}
\|u\|_{C^{1+\beta/2,\,2+\beta}
([\tau,T']\times\overline{\Omega})}
<\infty.
\]

By the definition of the parabolic H\"older norm,
\[
\|u(t)-u(s)\|_{C^2(\overline{\Omega})}
\leq
C|t-s|^{\beta/2},
\qquad
\tau\leq s,t<T_*,
\]
and $u(t)$ is uniformly bounded in
$C^{2+\beta}(\overline{\Omega})$. Choose
$0<\beta'<\beta$.
The estimate
\[
\|f\|_{C^{2+\beta'}(\overline{\Omega})}
\leq
C
\|f\|_{C^2(\overline{\Omega})}^{1-\beta'/\beta}
\|f\|_{C^{2+\beta}(\overline{\Omega})}^{\beta'/\beta}
\]
applied to $f=u(t)-u(s)$ shows that $u(t)$ is Cauchy in
$C^{2+\beta'}(\overline{\Omega})$ as $t,s\uparrow T_*$. Indeed,
\[
\|u(t)-u(s)\|_{C^{2+\beta}(\overline{\Omega})}
\leq
2\sup_{\tau\leq r<T_*}
\|u(r)\|_{C^{2+\beta}(\overline{\Omega})}.
\]
Hence there exists a unique function
$u^{T_*}\in C^{2+\beta'}(\overline{\Omega})$
such that
$u(t)\longrightarrow u^{T_*}
\quad\text{in }C^{2+\beta'}(\overline{\Omega})
\quad\text{as }t\uparrow T_*$.
Since
$u(t,x)=0
\qquad
\text{for }0<t<T_*,\quad x\in\partial\Omega$, the convergence in $C^{2+\beta'}(\overline{\Omega})$ gives
$u^{T_*}=0
\qquad\text{on }\partial\Omega$. Moreover, the positive-time H\"older estimate gives
$u_t\in C([\tau,T']\times\overline{\Omega})$. The time-independent boundary condition may therefore be differentiated
with respect to $t$, yielding
\[
u_t=0
\qquad\text{on }[\tau,T']\times\partial\Omega.
\]
Substitution into the equation gives
\[
\mathcal{K}_{\varepsilon,\nu}[u(t)]
-\mu\cT_\eta(x,u(t))
=0
\qquad
\text{for }x\in\partial\Omega,\quad
\tau\leq t\leq T'.
\]

The nonlinear operator is continuous with respect to the
$C^2(\overline{\Omega})$ norm. Therefore,
\[
\mathcal{K}_{\varepsilon,\nu}[u(t)]
-\mu\cT_\eta(x,u(t))
\longrightarrow
\mathcal{K}_{\varepsilon,\nu}[u^{T_*}]
-\mu\cT_\eta(x,u^{T_*})
\]
in $C^0(\overline{\Omega})$ as $t\uparrow T_*$. Consequently,
$\mathcal{K}_{\varepsilon,\nu}[u^{T_*}]
-\mu\cT_\eta(x,u^{T_*})
=0
\qquad\text{on }\partial\Omega$.
 Thus $u^{T_*}$ satisfies the compatibility condition required by
Lemma~\ref{lem:local}.

The temporal H\"older estimates also extend to $t=T_*$. For example,
for $\tau\leq t<s<T_*$,
\[
\|D_x^2u(t)-D_x^2u(s)\|_{C^0(\overline{\Omega})}
\leq
C|t-s|^{\beta/2}.
\]
Using the convergence of $D_x^2u(s)$ as $s\uparrow T_*$ gives
\[
\|D_x^2u(t)-D_x^2u^{T_*}\|_{C^0(\overline{\Omega})}
\leq
C|T_*-t|^{\beta/2}.
\]
Define
\[
g_*=
\mathcal{K}_{\varepsilon,\nu}[u^{T_*}]
-\mu\cT_\eta(x,u^{T_*}).
\]
The same argument for $D_xu$ and $u_t$, after reducing the exponent if necessary, gives
\[
\|D_xu(t)-D_xu^{T_*}\|_{C^0(\overline{\Omega})}
\leq C|T_*-t|^{(1+\theta_-)/2}
\]
and
\[
\|D_x^2u(t)-D_x^2u^{T_*}\|_{C^0(\overline{\Omega})}
+
\|u_t(t)-g_*\|_{C^0(\overline{\Omega})}
\leq C|T_*-t|^{\theta_-/2}
\]
for every
\[
0<\theta_-<\beta'
\]
and $\tau\leq t<T_*$.
The convergence in
$C^{2+\beta'}(\overline{\Omega})$ also preserves the required spatial
H\"older estimates. Therefore,
\[
u\in
C^{1+\theta_-/2,\,2+\theta_-}
([\tau,T_*]\times\overline{\Omega})
\qquad
\text{for every }0<\theta_-<\beta'.
\]

Apply Lemma~\ref{lem:local} at time $T_*$ with initial value $u^{T_*}$.
There exist $\delta>0$, $\theta_+\in(0,\beta']$, and a unique local
solution $\widehat{u}$ on $[T_*,T_*+\delta]$ satisfying
\[
\widehat{u}(T_*,\cdot)=u^{T_*}
\]
and
\[
\widehat{u}\in
C^{1+\theta_+/2,\,2+\theta_+}
([T_*,T_*+\delta]\times\overline{\Omega}).
\]
The convergence of $u(t)$ to $u^{T_*}$ and the initial condition for $\widehat{u}$ show that their spatial derivatives up to second order agree at $t=T_*$. Since both equations have the same value $g_*$ there,
\[
\lim_{t\uparrow T_*}u_t(t)=g_*=\widehat{u}_t(T_*,\cdot)
\]
in $C^0(\overline{\Omega})$.

Define
\[
\widetilde{u}(t,x)
=
\begin{cases}
u(t,x), & 0\leq t\leq T_*,\\
\widehat{u}(t,x), & T_*\leq t\leq T_*+\delta.
\end{cases}
\]
The values of
\[
\widetilde{u},\qquad
D_x\widetilde{u},\qquad
D_x^2\widetilde{u},\qquad
\widetilde{u}_t
\]
from the two time intervals agree at $t=T_*$. Choose
$0<\theta<\min\{\theta_-,\theta_+\}$. Lemma~\ref{lem:interface-patching} then gives
\[
\widetilde{u}\in
C^{1+\theta/2,\,2+\theta}
([\tau,T_*+\delta]\times\overline{\Omega}).
\]
Uniqueness from Lemma~\ref{lem:stability-local} shows that
$\widetilde{u}$ is the unique continuation of the original solution.
Hence $u$ extends beyond $T_*$.
\end{proof}

\section{Proof of the main theorems}
\label{sec:proof-main}

\begin{proof}[Proof of Proposition~\ref{prop:energy}]
The variation formula \eqref{eq:scalar-variation} follows from the chain differentiation formula and the boundedness of $h_\eta$.  Along the monotone reaction subflow, substitution of $u_t=-\mu\cT_\eta(x,u)$ gives \eqref{eq:energy-dissipation}.  Substitution of the full equation gives the displayed full-flow identity.
\end{proof}

\begin{proof}[Proof of Theorem~\ref{thm:global}]
Lemma~\ref{lem:local} gives a unique Schauder-classical solution on a
nontrivial interval. Let $T_{\max}$ be the maximal existence time.
Proposition~\ref{prop:range} gives
$0\leq u\leq 1$
on every compact subinterval of $[0,T_{\max})$.

Suppose that
$T_{\max}<\infty$. Applying Lemma~\ref{lem:gradient} with
$S=T_{\max}
$ gives
$\sup_{0\leq t<T_{\max}}
\|u(t)\|_{C^1(\overline{\Omega})}
<\infty$. This bound depends only on $T_{\max}$ and the fixed data and is valid on
the entire interval $[0,T_{\max})$. Lemma~\ref{lem:continuation} then
extends the solution beyond $T_{\max}$, which contradicts the definition
of the maximal existence time. Hence
$T_{\max}=\infty$. The finite-time $C^1$ estimate \eqref{eq:finite-time-c1} follows from
Lemma~\ref{lem:gradient} with
$S=T$. To obtain the global Schauder estimate \eqref{eq:positive-holder}, let
$S_{\mathrm{loc}}$ be the initial local-existence time and choose
$0<s<\frac12\min\{T,S_{\mathrm{loc}}\}$. Combine the estimate from Lemma~\ref{lem:local} on
$[0,2s]$ with Lemma~\ref{lem:schauder}, applied with the fixed time $T$, on $[s,T]$. After choosing a common positive exponent below the two H\"older
exponents, Lemma~\ref{lem:holder-patching} gives a $C^{1+\beta_T/2,\,2+\beta_T}$ bound on $[0,T]\times\overline{\Omega}$. Uniqueness follows from Lemma~\ref{lem:stability-local} applied to
solutions with the same initial data.
\end{proof}

\begin{proof}[Proof of Theorem~\ref{thm:stability}]
Apply Lemma~\ref{lem:stability-local} on $[0,T]$ for arbitrary $T>0$.  Since both solutions are global, $T$ is arbitrary, and \eqref{eq:stability} follows for every $t\ge0$.
\end{proof}

\section{Application to 3D and 3D+time nuclei data}
\label{sec:interpretation}

The model \eqref{eq:main-pde} is intended for the segmentation of touching
and dividing cell nuclei, with fixed parameters $\nupar>0$ and $\eps>0$.
The interpretation of the evolving function, the reaction term, and the
decision level $q$ is given in Appendix~\ref{app:model-interpretation}.

For a single 3D microscopy image, $d=3$ and
\[
x=(x_1,x_2,x_3).
\]
The function $u(t,x)$ evolves in artificial segmentation time. Image-edge
information enters through the prescribed coefficient $G$, while information
about nearby candidate nuclei enters through the interaction weight
$\Lambda$. The latter is constructed from candidate centers,
graph-neighborhood relations, prescribed neighboring-candidate maps, and
smooth spatial weights.

For 3D+time data, $d=4$ and
\[
x=(x_1,x_2,x_3,\theta),
\]
where $\theta$ denotes physical image time. The intensity function
\[
I^0(x_1,x_2,x_3,\theta)
\]
and the coefficients $G$ and $\Lambda$ are defined on the four-dimensional
computational domain. They may be constructed frame by frame in the spatial
variables or locally in space and physical time using a prescribed scaled
space-time metric. The unknown
\[
u(t,x_1,x_2,x_3,\theta)
\]
then evolves in artificial segmentation time $t$ according to the same
equation.

The global existence and stability results apply in both cases once the
prescribed coefficient functions satisfy the assumptions of the analysis.

\section{Conclusion}

We introduced a regularized subjective-surface model for touching and dividing cell nuclei and established a unique global classical solution whose restriction to every finite time interval is Schauder-classical for fixed $\nupar>0$ and $\eps>0$. The analysis proves preservation of the physical range, regularity on every finite time interval, and fixed-coefficient $L^\infty$ nonexpansiveness. The central a priori step is the global spatial-gradient bound obtained by combining gradient estimates near $\partial\Omega$ with interior gradient estimates. Positive-time H\"older-gradient theory and Schauder estimates then yield global continuation.

The coefficient $G$ incorporates image-edge information, while the graph-based construction of $\Lambda$ incorporates fixed information from neighboring nucleus candidates into the monotone reaction term. The resulting existence and stability theory gives the mathematical foundation for the proposed model of touching and dividing nuclei in 3D and 3D+time microscopy data.

\appendix
\section{Verification of the global H\"older-gradient estimate}
\label{app:gradient-input}

Fix $0<\tau_0\le T$. For each $T'\in[\tau_0,T]$, apply
\cite[Theorem~4.7]{Lieberman1986} to
$Q_{T'}=(0,T')\times\Omega$. In the notation of that theorem, set
\[
A^{ij}(X,z,p)=a^{ij}(x,p),\qquad
F(X,z,p)=\nabla G(x)\cdot p-\mu\Lambda(x)H_\eta(z-q),
\]
and choose the admissible exponent $\gamma=2$. Proposition~\ref{prop:range} and the finite-time $C^1$ estimate give
\[
|z|\le1,\qquad |p|\le M_1.
\]
On this range, Lemma~\ref{lem:ellipticity} gives
$\nupar I\le(A^{ij})\le(\nupar+g^*)I$. Moreover, $A^{ij}$ is independent of $t$ and $z$, and its required
$x$- and $p$-derivatives are bounded because
$G\in C^{3+\alpha}(\overline\Omega)$ and $\eps>0$. Thus
\[
A_z^{ij}=A_t^{ij}=0,\qquad
\sup_{|p|\le M_1}
\bigl(|D_xA|+|D_pA|+|D_{xp}^2A|+|D_{pp}^2A|\bigr)<\infty.
\]
The lower-order term satisfies
\[
|F(X,z,p)|\le C(1+|p|),\qquad
F_p=\nabla G,\qquad F_{pp}=F_{zp}=0,
\]
while $F_z$, $F_{zz}$, and $D_xF$ are controlled by the fixed norms of
$G$, $\Lambda$, and $H_\eta$.

The hypotheses in \cite[(4.17), p.~378]{Lieberman1986} hold with
\[
K=1+M_1,\qquad \lambda_K=\nupar.
\]
Condition~(4.25) in \cite[p.~381]{Lieberman1986} becomes, since
$\gamma=2$,
\[
|\partial_zA^{ij}|+\sum_{k=1}^d|\partial_{p_k}A^{ij}|
+|A_x^{ij}|+|A_t^{ij}|+|F|\le\mu_K,
\]
and follows from
\[
A_z^{ij}=A_t^{ij}=0,\qquad
\sum_{i,j,k}|\partial_{p_k}A^{ij}|
+\sum_{i,j,k}|\partial_{x_k}A^{ij}|+|F|\le C(M_1).
\]

The prescribed data are
\[
\varphi(t,x)=
\begin{cases}
u^0(x),&t=0,\\
0,&x\in\partial\Omega.
\end{cases}
\]
Set $s=t/T'$ and work on $(0,1)\times\Omega$. Choose a bounded extension
operator
\[
E:C^{2+\alpha}(\overline\Omega)\longrightarrow C^{2+\alpha}(U),
\]
where $U\supset\overline\Omega$, and choose
$\widetilde\zeta\in C^\infty(J)$ on an interval $J\supset[0,1]$ with
$\widetilde\zeta(0)=1$. Define
\[
\widetilde\Phi(s,x)=\widetilde\zeta(s)\,Eu^0(x).
\]
Since $u^0=0$ on $\partial\Omega$, this function agrees with the rescaled
prescribed data on the parabolic boundary. By
\cite[p.~350]{Lieberman1986},
\[
\|\varphi\|_{H_2(\partial_pQ)}
\le\|\widetilde\Phi\|_{H_2(J\times U)}
\le C\|u^0\|_{C^{2+\alpha}(\overline\Omega)},
\]
with a constant uniform for $T'\in[\tau_0,T]$.

After rescaling, the principal matrix and lower-order term are $T'A^{ij}$
and $T'F$. Since $T'\in[\tau_0,T]$, the ellipticity constants, coefficient
bounds, and prescribed-data norms are uniform. Hence
\[
\|\nabla\widetilde u\|_{C^{\beta/2,\beta}
([0,1]\times\overline\Omega)}\le C,
\qquad
\widetilde u(s,x)=u(T's,x),
\]
for some $\beta\in(0,1)$. Returning to $t$ gives
\[
[\nabla u]_{C_t^{\beta/2}C_x^0}
\le (T')^{-\beta/2}C
\le\tau_0^{-\beta/2}C,
\]
while the spatial seminorm is unchanged. Therefore,
\[
\|\nabla u\|_{C^{\beta/2,\beta}
([0,T']\times\overline\Omega)}\le C
\]
with the same exponent and constant for every $T'\in[\tau_0,T]$.

Lemma~\ref{lem:schauder} uses this estimate to obtain the corresponding
positive-time $C^{1+\beta/2,2+\beta}$ bound.

\section{Model interpretation and threshold selection}
\label{app:model-interpretation}

This appendix gives the modeling interpretation of the evolution equation and clarifies the role of the decision level $q$.  The discussion is intended to complement the analytical results.

\subsection{Interpretation of the evolving function}

The model evolves a single function $u=u(t,x)$ whose selected level sets represent nuclear boundaries.  At each artificial segmentation time $t$, the value $u(t,x)$ indicates whether a point $x$ lies in the nuclear region, in the background, or in the transition zone near a nuclear boundary.  The evolution equation is
\begin{equation}\label{eq:appendix-model}
\partial_t u
=
\nupar\Delta u
+
\left(\eps^2+|\nabla u|^2\right)^{1/2}
\operatorname{div}\!\left(
G(x)\frac{\nabla u}{\left(\eps^2+|\nabla u|^2\right)^{1/2}}
\right)
-
\mu\Lambda(x)H_\eta(u-q).
\end{equation}
It combines three effects.

\paragraph{Linear smoothing.}
The term
\[
\nupar\Delta u
\]
smooths the evolving function and suppresses small-scale oscillations.  Because $\nupar>0$, it also contributes the uniform ellipticity used in the analysis.

\paragraph{Edge-sensitive geometric motion.}
The term
\[
\left(\eps^2+|\nabla u|^2\right)^{1/2}
\operatorname{div}\!\left(
G(x)\frac{\nabla u}{\left(\eps^2+|\nabla u|^2\right)^{1/2}}
\right)
\]
is the regularized subjective-surface contribution.  The prescribed coefficient $G$ is constructed from image information.  It is typically smaller near strong image edges and larger in relatively homogeneous regions.  Consequently, the evolving surface moves more freely in smooth regions and slows near likely nuclear boundaries.  The parameter $\eps>0$ regularizes the denominator and prevents degeneracy when $\nabla u=0$.

\paragraph{Neighbor-interaction term.}
The term
\[
-\mu\Lambda(x)H_\eta(u-q)
\]
incorporates fixed information about nearby candidate nuclei.  Here $\Lambda(x)\ge0$ identifies regions where neighboring-candidate information is important, $\mu>0$ controls the strength of the interaction, $q$ is the decision level, and $H_\eta$ is a smooth approximation of a step function.  As $u$ crosses the level $q$, the factor $H_\eta(u-q)$ activates the interaction term.  The evolution can therefore respond differently in regions where two nuclei are believed to touch or where one nucleus is dividing.

\subsection{Choice and interpretation of the decision level}

The parameter $q\in(0,1)$ is the decision level used both to interpret the segmentation and to activate the monotone reaction.  Since the reaction profile appears as
\[
H_\eta(u-q),
\]
we have
\[
u\le q-\eta
\quad\Longrightarrow\quad
H_\eta(u-q)=0,
\]
so the neighboring-nucleus penalty is inactive, whereas
\[
 u\ge q+\eta
\quad\Longrightarrow\quad
H_\eta(u-q)=1,
\]
so the penalty is fully active.  For values in the transition band $q-\eta<u<q+\eta$, the reaction turns on smoothly.

The analytical assumptions require
\[
q\in(0,1),
\qquad
0<\eta<\min\{q,1-q\}.
\]
This condition ensures that the entire transition interval
\[
[q-\eta,q+\eta]
\]
lies inside the physical range $[0,1]$.

From the application viewpoint, $q$ is the level used to distinguish the nuclear region from the background.  When $u\approx1$ represents nucleus and $u\approx0$ represents background, the symmetric choice
\[
q=\frac12
\]
is a natural default.  In that case,
\begin{equation}\label{eq:appendix-segmented-region}
\Omega_t^{\mathrm{nuc}}
=
\left\{x\in\Omega:u(t,x)>\frac12\right\}.
\end{equation}
The existence and stability results do not single out an optimal value of $q$; the theorem treats $q$ as a prescribed parameter satisfying the above constraints.  In numerical work, $q$ may instead be selected by validation against annotated data, by an image-dependent thresholding criterion, or by a sensitivity study.

\subsection{Relation to the analytical results}

The global theorem shows that, for fixed admissible parameters and compatible initial data, the evolution in \eqref{eq:appendix-model} exists for all artificial times, is unique, remains in the physical range $0\le u\le1$, satisfies Schauder estimates on every finite time interval, and depends nonexpansively on the initial data in $L^\infty$.  These conclusions justify the use of the level-set interpretation in \eqref{eq:appendix-segmented-region} throughout the evolution.

\section*{Data Availability}

No datasets were generated or analysed during the current study.

\end{document}